\def\NZQ{\mathbb}               % the font for N,Z,Q,R,C
\def\NN{{\NZQ N}}
\def\QQ{{\NZQ Q}}
\def\ZZ{{\NZQ Z}}
\newtheorem{Theorem}{Theorem}[section]
\newtheorem{Lemma}[Theorem]{Lemma}
\newtheorem{Corollary}[Theorem]{Corollary}
\newtheorem{Proposition}[Theorem]{Proposition}
\let\epsilon\varepsilon
\let\phi=\varphi
\let\kappa=\varkappa
\begin{document}

\title{Ramification of  local rings along valuations}
\author{Steven Dale Cutkosky and  Pham An Vinh}
\thanks{The first author was partially supported by NSF}

\address{Steven Dale Cutkosky, Department of Mathematics,
University of Missouri, Columbia, MO 65211, USA}
\email{cutkoskys@missouri.edu}

\address{Pham An Vinh, Department of Mathematics,
University of Missouri, Columbia, MO 65211, USA}
\email{vapnnc@mizzou.edu}

\begin{abstract} In this paper we discuss stable forms of extensions of algebraic local rings along a valuation in all dimensions 
over a field $k$ of characteristic zero, and generalize a formula of Ghezzi, H\`a and Kashcheyeva describing the extension of associated graded rings along the valuation for stable extensions of regular algebraic local rings of dimension two to arbitrary ground fields $k$ of characteristic zero. We discuss the failure of this result in positive characteristic.
\end{abstract}

\maketitle

\section{Introduction}

Suppose that $k$ is a field, $K$ is an algebraic function field over $k$ and $\nu$ is a valuation of $K$
(which is trivial on $k$). Let $V_{\nu}$ be the valuation ring of $\nu$, with maximal ideal $m_{\nu}$. Let $\Gamma_{\nu}$ be the value group of $\nu$. Important invariants associated to $\nu$ are its rank (one less than the number of prime ideals in $V_{\nu}$), rational rank ($\dim_{\QQ}\Gamma_{\nu}\otimes_{\ZZ}\QQ$) and dimension ($\dim\nu=\mbox{trdeg}_kV_{\nu}/m_{\nu}$). We have that
$\mbox{rank }\nu\le \mbox{rational rank }\nu$ and by Abhyankar's inequality (\cite{Ab1} and Appendix 2 \cite{ZS}),
\begin{equation}\label{eqN22}
\mbox{rational rank }\nu+\dim \nu\le \dim K
\end{equation}
where $\dim K=\mbox{trdeg}_kK$, and if equality holds, then $\Gamma_{\nu}\cong\ZZ^{rr}$ as an unordered group, where $rr=\mbox{rational rank }\nu$. Such valuations are called Abhyankar valuations.

An algebraic local ring of $K$ is a local ring $R$ which is essentially of finite type over $k$ and whose
quotient field is $K$. $R$ is dominated by $\nu$ if $R\subset V_{\nu}$ and $m_{\nu}\cap R=m_R$ is the maximal ideal of $R$.

A monoidal transform $R\rightarrow R_1$ of $R$ is a local ring $R_1$ of the blowup of a regular prime ideal $P$ of $R$ ($R/P$ is regular).
$R\rightarrow R_1$ is a quadratic transform if $R_1$ is a local ring of the blow up of the maximal ideal of $R$.
$R\rightarrow R_1$ is a monoidal transform along $\nu$ if $V_{\nu}$ dominates $R_1$.

 For each $\gamma \in \Gamma_{\nu}$, let
$$
\mathcal{P}_{\gamma}(R) = \{f \in R \mid \nu(f) \geq \gamma\} \text{ and } \mathcal{P}_{\gamma}^+(R) = \{f \in R \mid \nu(f) > \gamma\}.
$$
We define the associated graded algebra of $\nu$ on $R$ (as in \cite{T1}) as
$$
\text{gr}_{\nu}(R) = \bigoplus_{\gamma \in \Gamma_{\nu}} \mathcal{P}_{\gamma}(R)/\mathcal{P}_{\gamma}^+(R).
$$
If $f \in R$ and $\nu(f) = \gamma$, we define the initial form $\text{in}_{\nu}(f)$ of $f$ in $\text{gr}_{\nu}(R)$ as $f + \mathcal{P}_{\gamma}^+(R)\in \mathcal P_{\gamma}/\mathcal P_{\gamma}^+$. A sequence $\{P_i\}_{i \geq 0}$ in $R$ is called a generating sequence of $\nu$ in $R$ if $\{\text{in}_{\nu}(P_i)\}_{i \geq 0}$ generate $\text{gr}_{\nu}(R)$ as an $R/m_R$-algebra.

The semigroup of $R$ is
$$
S^{R}(\nu)=\{ \nu(f)\mid f\in R\setminus\{0\}\}.
$$
Now suppose that  $K^*$ is an algebraic function field over $k$ such that $K^*$ is finite separable over $K$, and $\nu^*$ is a valuation of $K^*$ which is an extension of $\nu$. Let
$$
n=\mbox{trdeg}_kK^*-\mbox{trdeg}_kV_{\nu^*}/m_{\nu^*}.
$$
Let
$$
e=[\Gamma_{\nu^*}:\Gamma_{\nu}]\mbox{ and }f=[V_{\nu^*}/m_{\nu^*}:V_{\nu}/m_{\nu}]
$$
be the reduced ramification index and relative degree of $\nu^*$ over $\nu$.

Suppose that $R$ and $S$ are algebraic local rings for $K$ and $K^*$ such that $S$ dominates $R$ and $\nu^*$ dominates $S$ (so that
$\nu$ dominates $R$).  

\begin{Lemma} \label{LemmaN1} Suppose that $V_{\nu^*}/m_{\nu^*}=(V_{\nu}/m_{\nu})(S/m_{\nu})$. Then $[S/m_S:R/m_R]=f$ if and only if 
$V_{\nu}/m_{\nu}$ and $S/m_S$ are linearly disjoint in $V_{\nu^*}/m_{\nu^*}$ over $R/m_R$.
\end{Lemma}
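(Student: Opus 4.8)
The plan is to reduce the statement to the standard characterization of linear disjointness through the multiplication map on a tensor product of residue fields. To fix notation, write $k_0 = R/m_R$, $k_1 = V_\nu/m_\nu$, $k_2 = S/m_S$ and $k_3 = V_{\nu^*}/m_{\nu^*}$. Since $S$ dominates $R$ and $\nu^*$ dominates $S$ (so $\nu$ dominates $R$), we get a commutative diagram of field extensions with $k_0 \subseteq k_1 \subseteq k_3$ and $k_0 \subseteq k_2 \subseteq k_3$, where $k_2$ is the image of $S$ in $k_3$. The hypothesis $V_{\nu^*}/m_{\nu^*}=(V_{\nu}/m_{\nu})(S/m_S)$ then reads $k_3 = k_1 k_2$, the compositum inside $k_3$, and by definition $f = [k_3 : k_1]$, which is finite because $K^*/K$ is finite.

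First I would introduce the multiplication map $\phi : k_1 \otimes_{k_0} k_2 \to k_3$, $a \otimes b \mapsto ab$, a homomorphism of $k_1$-algebras whose image is the subring $k_1[k_2]$ generated by $k_1$ and $k_2$. I would then verify that $\phi$ is surjective: since $k_1[k_2] \subseteq k_3$ and $[k_3 : k_1] = f < \infty$, the ring $k_1[k_2]$ is a finite-dimensional $k_1$-algebra which is a domain (being a subring of the field $k_3$), hence a field; as a field containing both $k_1$ and $k_2$ it contains the compositum $k_1 k_2 = k_3$, and since $k_1[k_2]\subseteq k_3$ we conclude $k_1[k_2]=k_3$.

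Next I would record the two reformulations that make the equivalence transparent. On one hand, by definition $k_1$ and $k_2$ are linearly disjoint over $k_0$ inside $k_3$ exactly when $\phi$ is injective. On the other hand, regarding $k_1 \otimes_{k_0} k_2$ as a free $k_1$-module on a $k_0$-basis of $k_2$, its $k_1$-rank equals $[k_2 : k_0]$, while the target $k_3$ has $k_1$-dimension $f$. Because $\phi$ is surjective, comparing $k_1$-dimensions gives $[k_2 : k_0] \geq f$ in all cases, with $\phi$ an isomorphism precisely when $\ker \phi = 0$, in which case $[k_2 : k_0] = f$.

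Combining these observations yields the claim: $\phi$ is injective $\iff$ $\phi$ is bijective $\iff$ $[k_2 : k_0] = f$, that is, $[S/m_S : R/m_R] = f$ if and only if $V_\nu/m_\nu$ and $S/m_S$ are linearly disjoint over $R/m_R$. This argument also disposes of the possibly infinite case automatically: if $[k_2 : k_0]$ were infinite, a surjection from a free $k_1$-module of infinite rank onto the $f$-dimensional space $k_3$ could not be injective, so linear disjointness would fail, consistent with $[k_2:k_0]\neq f$. The one step I would treat most carefully — and the only place where anything beyond linear bookkeeping enters — is the surjectivity of $\phi$, i.e. the identification of the ring $k_1[k_2]$ with the compositum field $k_3$; this is exactly where the finiteness $[k_3 : k_1] = f < \infty$ is essential.
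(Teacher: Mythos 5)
Your proof is correct, and it reorganizes the argument in a way that is genuinely different in presentation from the paper's, even though the mathematical core coincides. Writing $k_0=R/m_R$, $k_1=V_{\nu}/m_{\nu}$, $k_2=S/m_S$, $k_3=V_{\nu^*}/m_{\nu^*}$ as you do, the paper proves the two directions separately and concretely with bases: for the forward direction it extends a linearly independent subset of $k_2$ over $k_0$ to a basis $h_1,\dots,h_f$, asserts that these $f$ elements span $k_3$ over $k_1$ (hence form a basis, hence remain independent), and for the converse it picks a $k_1$-basis of $k_3$ consisting of elements of $k_2$ and compares dimensions to get $f\le [k_2:k_0]\le f$. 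You instead run everything through the multiplication map $\phi\colon k_1\otimes_{k_0}k_2\to k_3$ and a single dimension count. The pivot of both proofs is the same fact: because $f<\infty$ and $k_3$ is the compositum, the $k_1$-span of $k_2$ (equivalently $k_1[k_2]$, the image of $\phi$) is a finite-dimensional domain over $k_1$, hence a field, hence all of $k_3$; this is precisely the spanning claim the paper uses without comment in both directions and the surjectivity you rightly single out as the only nontrivial step. What your packaging buys: both implications fall out of one statement ($\phi$ injective iff bijective iff $[k_2:k_0]=f$, given surjectivity), and the possibility $[k_2:k_0]=\infty$ is disposed of explicitly rather than implicitly. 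What the paper's packaging buys: it uses only the naive definition of linear disjointness (finite independent sets over $k_0$ stay independent over the other field) and never needs the tensor-product characterization, which you take as the definition; if one insists on the naive definition, your proof needs one additional (standard and short) step identifying injectivity of $\phi$ with that condition.
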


\begin{proof} Suppose that $[S/m_S:R/m_R]=f$. Let $h_1,\ldots,h_s\in S/m_S$ be linearly independent over $R/m_R$. Extend this set to a basis $h_1,\ldots, h_f$ of $S/m_S$ over $R/m_R$. Then $h_1,\ldots, h_f$ span $V_{\nu^*}/m_{\nu^*}$ over $V_{\nu}/m_{\nu}$, so they are
linearly independent over $V_{\nu}/m_{\nu}$. 

Now suppose that $V_{\nu}/m_{\nu}$ and $S/m_S$ are linearly disjoint over $R/m_R$. There exist $\alpha_1,\ldots,\alpha_f\in S/m_S$ which are a basis of $V_{\nu^*}/m_{\nu^*}$ over $V_{\nu}/m_{\nu}$. Then $\alpha_1,\ldots,\alpha_f$ are linearly independent over $R/m_R$, so $[S/m_S:R/m_R]\ge f$. However, a basis of $S/m_S$ over $R/m_R$ is linearly independent over $V_{\nu}/m_{\nu}$, so 
$[S/m_S:R/m_R]=f$. 
\end{proof}

We will say that $R\rightarrow S$ is {\it monomial} if $R$ and $S$ are $n$-dimensional regular local rings and there exist regular parameters
$x_1,\ldots,x_n$ in $R$, $y_1,\ldots,y_n$ in $S$, an $n\times n$ matrix $A=(a_{ij})$ of natural numbers with $\mbox{Det}(A)\ne 0$ and units $\delta_i\in S$ such that
\begin{equation}\label{eqN2}
x_i=\delta_i\prod_{j=1}^ny_j^{a_{ij}}
\end{equation}
for $1\le i\le n$. In Theorem 1.1 \cite{C} it is proven that when the ground field $k$ has characteristic zero, there exists a commutative diagram
\begin{equation}\label{eqN1}
\begin{array}{ccc}
R_0&\rightarrow&S_0\\
\uparrow&&\uparrow\\
R&\rightarrow&S
\end{array}
\end{equation}
such that the vertical arrows are products of monoidal transforms along $\nu^*$ and $R_0\rightarrow S_0$ is monomial.
It is shown in Theorem 5.1 \cite{C} and Theorem 4.8 \cite{CP} that the matrix $A_0$ describing $R_0\rightarrow S_0$
(with respect to regular parameters $x_1(0),\ldots,x_n(0)$ in $R_0$ and $y_1(0),\ldots,y_n(0)$ in $S_0$) can be required to take a very special block form, which reflects the rank and rational rank of $\nu^*$. We will say that $R_0\rightarrow S_0$ is {\it strongly monomial} if it is monomial and the matrix $A_0$ has this special block form.

In Theorem 6.1 \cite{CP} it is shown (assuming that $k$ has characteristic zero) that we can always find a diagram (\ref{eqN1}) such that the following conditions hold:
\begin{enumerate}
\item[1)] $R_0\rightarrow S_0$ is strongly monomial.
\item[2)]  If
$$
\begin{array}{ccc}
R_1&\rightarrow&S_1\\
\uparrow&&\uparrow\\
R_0&\rightarrow&S_0
\end{array}
$$
is such that $R_1\rightarrow S_1$ is strongly monomial with respect to regular parameters $x_1(1),\ldots,x_n(1)$ in $R_1$ and
$y_1(1),\ldots,y_n(1)$ in $S_1$, and the vertical arrows are products of monoidal transforms, then 
\begin{enumerate}
\item[2a)]
The natural group homomorphism
$$
\ZZ^n/A_1^t\ZZ^n\rightarrow \Gamma_{\nu^*}/\Gamma_{\nu}
$$
defined by
$$
(b_1,\ldots,b_n)\mapsto [b_1\nu^*(y_1(1))+\cdots+b_n\nu^*(y_n(1))]
$$
is an isomorphism (where $A_1$ is the matrix of exponents of $R_1\rightarrow S_1$ with respect to our given systems of parameters).
\item[2b)] $V_{\nu^*}/m_{\nu^*}$ is the join $V_{\nu^*}/m_{\nu^*}=(V_{\nu}/m_{\nu})(S_1/m_{S_1})$.
\item[2c)] $V_{\nu}/m_{\nu}$ and $S_1/m_{S_1}$ are linearly disjoint over $R_1/m_{R_1}$ in $V_{\nu^*}/m_{\nu^*}$.
\end{enumerate}
\end{enumerate}

Theorem 6.1 \cite{CP} and Lemma \ref{LemmaN1} implies that, given $R\rightarrow S$, there exists a monomial extension $R_0\rightarrow S_0$ as in (\ref{eqN1}) satisfying 1) and 2) above. In Theorem 6.3 \cite{CP} it is shown that the extension $V\rightarrow V^*$ can naturally be understood as a direct limit of $R_0\rightarrow S_0$ as above.

We will say that $R_0\rightarrow S_0$ is {\it stable} if the conclusions 1) and 2) above hold.

If $R\rightarrow S$ is stable, we have that
$$
e=\mbox{Det}(A)\mbox{ and }f=[S/m_S:R/m_R].
$$
where $e$ is  the reduced ramification index and $f$ is the relative degree of $\nu^*$ over $\nu$.

The simplest valuations are the Abhyankar valuations (defined at the beginning of this section). In this case,   we easily obtain
a very strong statement comparing the associated graded rings of the valuations. We have the following proposition.

\begin{Proposition}\label{Prop1} Suppose that $k$ has characteristic zero, $\nu$ is an Abhyankar valuation and $R\rightarrow S$ is stable. Then we have a natural isomorphism of graded rings
$$
\mbox{gr}_{\nu^*}(S)\cong \left(\mbox{gr}_{\nu}(R)\otimes_{R/m_R}S/m_S\right)[\overline y_1,\ldots,\overline y_n]
$$
where $\overline y_1,\ldots,\overline y_n$ are the initial forms of $y_1,\ldots,y_n$, with the only relations being
$$
[x_i]=[\delta_i]\overline y_1^{a_{i1}}\cdots \overline y_n^{a_{in}}\,\,\,1\le i\le n
$$
obtained from (\ref{eqN2}) ($[\delta_i]$ is the class of $\delta_i$ in $S/m_S$). The degree of the extension of quotient fields of
$$
\mbox{gr}_{\nu}(R)\rightarrow \mbox{gr}_{\nu^*}(S)
$$
is $ef$.
\end{Proposition}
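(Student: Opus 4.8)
The plan is to reduce the statement to the fact that, for an Abhyankar valuation, the associated graded ring of a regular local ring is a polynomial ring in the initial forms of a regular system of parameters. First I would record that $\nu^*$ is again Abhyankar: since $K^*/K$ is finite we have $\mbox{trdeg}_kK^*=\dim K$, while the finiteness of $e=[\Gamma_{\nu^*}:\Gamma_{\nu}]$ and $f=[V_{\nu^*}/m_{\nu^*}:V_{\nu}/m_{\nu}]$ forces $\nu^*$ to have the same rational rank and dimension as $\nu$. Hence $n$ equals this common rational rank, $\dim R=\dim S=n$, and $\Gamma_{\nu}\otimes_{\ZZ}\QQ=\Gamma_{\nu^*}\otimes_{\ZZ}\QQ$ has dimension $n$. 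In the strongly monomial situation the parameter values $\nu(x_1),\ldots,\nu(x_n)$ and $\nu^*(y_1),\ldots,\nu^*(y_n)$ are rationally independent (they are interchanged by the invertible matrix $A$ via $\nu(x_i)=\sum_ja_{ij}\nu^*(y_j)$, using $\nu^*(\delta_i)=0$, and the independence of one set is equivalent, through condition 2a), to that of the other). Rational independence of the values gives at once algebraic independence of the initial forms, since a nontrivial homogeneous relation among them would force two distinct exponent vectors to yield the same value.

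The substantive input is that these initial forms \emph{generate}, i.e. that $x_1,\ldots,x_n$ and $y_1,\ldots,y_n$ are generating sequences, so that $\mbox{gr}_{\nu}(R)=(R/m_R)[\mbox{in}_{\nu}(x_1),\ldots,\mbox{in}_{\nu}(x_n)]$ and $\mbox{gr}_{\nu^*}(S)=(S/m_S)[\overline y_1,\ldots,\overline y_n]$ are polynomial rings. I would prove this by showing that $\nu^*$ is monomial in $y_1,\ldots,y_n$: expanding $g\in S$ in $m_S$-adic coordinates and truncating modulo a high power of $m_S$ (whose value tends to infinity), the value $\nu^*(g)$ equals the minimum of the occurring monomial values $\sum_jb_j\nu^*(y_j)$, and by rational independence this minimum is attained at a unique exponent vector, so $\mbox{in}_{\nu^*}(g)$ is a scalar times a single monomial $\overline y_1^{b_1}\cdots\overline y_n^{b_n}$. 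This is the step where I expect the real difficulty to lie; it is essentially the monomial/strongly-monomial description developed in \cite{C} and \cite{CP}, which I would invoke, the only delicate point being the coefficient-field bookkeeping when passing to the completion.

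Granting the polynomial descriptions, the isomorphism is formal. The inclusion $R\subset S$ with $\nu^*|_R=\nu$ induces a graded homomorphism $\mbox{gr}_{\nu}(R)\to\mbox{gr}_{\nu^*}(S)$ with $\mbox{in}_{\nu}(x_i)\mapsto\mbox{in}_{\nu^*}(x_i)$, and taking initial forms in (\ref{eqN2}) gives $\mbox{in}_{\nu^*}(x_i)=[\delta_i]\overline y_1^{a_{i1}}\cdots\overline y_n^{a_{in}}$. Extending scalars to $S/m_S$ and adjoining the $\overline y_j$ yields the natural map
$$
\left(\mbox{gr}_{\nu}(R)\otimes_{R/m_R}S/m_S\right)[\overline y_1,\ldots,\overline y_n]\longrightarrow\mbox{gr}_{\nu^*}(S),
$$
which kills the stated relations. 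Since $\mbox{gr}_{\nu}(R)\otimes_{R/m_R}S/m_S=(S/m_S)[\mbox{in}_{\nu}(x_1),\ldots,\mbox{in}_{\nu}(x_n)]$, the source modulo the relations $[x_i]=[\delta_i]\overline y_1^{a_{i1}}\cdots\overline y_n^{a_{in}}$ is obtained by eliminating each $[x_i]$ (legitimate because the $[\delta_i]$ are units), giving exactly $(S/m_S)[\overline y_1,\ldots,\overline y_n]=\mbox{gr}_{\nu^*}(S)$ with the map becoming the identity. Thus the map is an isomorphism and the listed relations are the only ones.

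Finally I would compute the degree of $\mbox{Frac}(\mbox{gr}_{\nu}(R))=(R/m_R)(\mbox{in}_{\nu}(x_1),\ldots,\mbox{in}_{\nu}(x_n))\hookrightarrow(S/m_S)(\overline y_1,\ldots,\overline y_n)=\mbox{Frac}(\mbox{gr}_{\nu^*}(S))$ by factoring it through $(S/m_S)(\mbox{in}_{\nu}(x_1),\ldots,\mbox{in}_{\nu}(x_n))$. The lower step adjoins algebraically independent elements to a finite field extension, so (using $[S/m_S:R/m_R]=f$ and that in characteristic zero the minimal polynomial stays irreducible over a rational function field) it has degree $f$. The upper step sends each $\mbox{in}_{\nu}(x_i)=[\delta_i]\overline y_1^{a_{i1}}\cdots\overline y_n^{a_{in}}$ to a monomial in the $\overline y_j$, so it is a monomial extension whose degree is the index in $\ZZ^n$ of the lattice spanned by the rows of $A$, namely $|\mbox{Det}(A)|=e$. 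Multiplying gives $ef$, consistent with the identities $e=\mbox{Det}(A)$ and $f=[S/m_S:R/m_R]$ recorded for stable extensions.
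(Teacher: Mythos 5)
Your proposal is correct and takes essentially the same approach as the paper: stability together with the Abhyankar hypothesis gives rational independence of the values $\nu(x_1),\ldots,\nu(x_n)$ and $\nu^*(y_1),\ldots,\nu^*(y_n)$, the completion/truncation argument (the paper uses Hensel's lemma and the unique monomial extension $\hat\nu$ to $\hat R$) shows that $\mbox{gr}_{\nu}(R)$ and $\mbox{gr}_{\nu^*}(S)$ are polynomial rings over $R/m_R$ and $S/m_S$, and the isomorphism and the degree $ef$ then follow formally. The only difference is one of exposition: the paper compresses the final isomorphism and degree computation into ``the proposition follows,'' while you spell them out (elimination of the $[x_i]$ via the unit relations, and the tower of degree $f$ from the residue field extension and degree $e=|\mbox{Det}(A)|$ from the monomial lattice extension), which is exactly the intended argument.
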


\begin{proof}
Since $R\rightarrow S$ is stable, and $\nu^*$ and $\nu$ are Abhyankar valuations, we have that
$\nu^*(y_1),\ldots,\nu^*(y_n)$ is a $\ZZ$-basis of $\Gamma_{\nu^*}$ and $\nu(x_1),\ldots,\nu(x_n)$ is a $\ZZ$-basis of $\Gamma_{\nu}$.

By Hensel's lemma, $\hat R\cong k'[[x_1,\ldots,x_n]]$ where $k'\cong R/m_R$ is a coefficient field of $\hat R$.
Since $\nu(x_1),\ldots,\nu(x_n)$ are rationally independent,
$\nu$ has a unique extension to a valuation $\hat\nu$ of the quotient field of $\hat R$, defined by
$$
\hat\nu(f)=\min\{i_1\nu(x_1)+\cdots+i_n\nu(x_n)\mid a_{i_1,\ldots,i_n}\ne 0\}
$$
if $f=\sum a_{i_1,\ldots,i_n}x_1^{i_1}\cdots x_n^{i_n}\in k'[[x_1,\ldots,x_n]]$ (with $a_{i_1,\ldots,i_n}\in k'$).
Since distinct monomials have distinct values, we have an isomorphism of  residue fields  $V_{\nu}/m_{\nu}\cong R/m_R$.

Hence ${\rm gr}_{\nu}(R)\cong R/m_R[\overline x_1,\ldots,\overline x_n]$, is a polynomial ring, where
$\overline x_i$ is the class of $x_i$, with the grading $\deg \overline x_i=\nu(x_i)$. Further ${\rm gr}_{\nu^*}(S)\cong S/m_S[\overline y_1,\ldots,\overline y_n]$, is a polynomial ring, where
$\overline y_i$ is the class of $y_i$. The proposition follows.
\end{proof}

If $\nu$ is an Abhyankar valuation, and $R\rightarrow S$ is quasi-finite, we   have that $S^S(\nu^*)$ is  finitely generated as a module over the semigroup $S^R(\nu)$ by Proposition \ref{Prop1}.

It is natural to ask if an analog of Proposition \ref{Prop1} holds for more general valuations.  We have the essential difference that   the valuation groups $\Gamma_{\nu}$ are not finitely generated in general. There even exist examples where $R\rightarrow S$ is quasi-finite but $S^S(\nu^*)$ is not a finitely generated module over $S^R(\nu)$. In Theorem 9.4 \cite{CV} an example is given of a finite monomial
extension of two dimensional regular algebraic local rings (over any ground field) such that $S^S(\nu^*)$ is not a finitely generated module over $S^R(\nu)$. This example is necessarily not stable.
Some other examples are given in \cite{CT} showing bad behavior of  $S^S(\nu^*)$ over $S^R(\nu)$.

However, the conclusions of Proposition \ref{Prop1} always hold for stable mappings $R\rightarrow S$ when $R$ and $S$ have dimension two
($n=2$).
By Abhyankar's inequality, when $n=2$, $\nu$ is an Abhyankar valuation unless $\nu$ is rational (the value group is order isomorphic to a subgroup of the rational numbers). We have the following theorem, which generalizes Proposition \ref{Prop1} to this case.
This surprising theorem was proven when $k$ is algebraically closed of characteristic zero and $\dim K=2$ by Ghezzi, H\`a and Kashcheyeva in \cite{GHK}. If $n=2$, $\nu$ is rational and $R\rightarrow S$ is stable, then $R$ has regular parameters $u,v$, $S$ has regular parameters $x,y$ and there exist a unit $\gamma$ in $S$ such that
\begin{equation}\label{eqN23}
u=\gamma x^e, v=y,
\end{equation}
where $e=|\Gamma_{\nu^*}/\Gamma_{\nu}|$ is the reduced ramification index.

\begin{Theorem}\label{Theorem2} Suppose that $k$ is a field of characteristic zero, $\nu^*$ is a rational 0-dimensional valuation, $n=2$  and $R\rightarrow S$ is stable.
Then
$$
{\rm gr}_{\nu^*}(S)\cong \left({\rm gr}_{\nu}(R)\otimes_{R/m_R}S/m_S\right)[Z]/(Z^e-[\gamma_0]^{-1}[u]),
$$
and the degree of the extension  of quotient fields of  ${\mbox gr}_{\nu}(R)\rightarrow \mbox{gr}_{\nu^*}(S)$ is $ef$.
\end{Theorem}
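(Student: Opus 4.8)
The plan is to construct a natural surjection onto $\text{gr}_{\nu^*}(S)$ from the ring on the right, and then to prove it is an isomorphism by matching graded components degree by degree. First I would note that the inclusion $R\hookrightarrow S$, together with $\nu^*|_K=\nu$ and the equalities $\mathcal P_\rho(R)=R\sect \mathcal P_\rho(S)$, $\mathcal P_\rho^+(R)=R\sect \mathcal P_\rho^+(S)$, induces a homomorphism of $\Gamma_\nu$-graded rings $\text{gr}_\nu(R)\to \text{gr}_{\nu^*}(S)$ carrying $\text{in}_\nu(f)$ to $\text{in}_{\nu^*}(f)$; its degree-zero part is the inclusion of fields $R/m_R\hookrightarrow S/m_S=[\text{gr}_{\nu^*}(S)]_0$. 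Adjoining an indeterminate $Z$ placed in degree $\beta:=\nu^*(x)$ and sending $Z\mapsto \text{in}_{\nu^*}(x)$ then gives a graded homomorphism $\Phi\colon(\text{gr}_\nu(R)\tensor_{R/m_R}S/m_S)[Z]\to\text{gr}_{\nu^*}(S)$. Applying $\text{in}_{\nu^*}$ to $u=\gamma x^e$ yields $[u]=[\gamma_0]\,\text{in}_{\nu^*}(x)^e$, so $Z^e-[\gamma_0]^{-1}[u]\in\ker\Phi$ and $\Phi$ descends to a graded homomorphism $\bar\Phi$ on the quotient ring in the statement. Everything then reduces to showing that $\bar\Phi$ is bijective and to computing the degree of the induced extension of quotient fields.

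For surjectivity I would use that $v=y$ forces $\Gamma_{\nu^*}=\Gamma_\nu+\ZZ\beta$, while condition 2a) forces the class of $\beta$ to generate $\Gamma_{\nu^*}/\Gamma_\nu\iso\ZZ/e\ZZ$. The key step is to produce a generating sequence of $\nu^*$ in $S$ built from $x$ and from a generating sequence of $\nu$ in $R$: starting from a generating sequence $u,v,P_2,\ldots$ of $\nu$ in $R$ and replacing $u$ by $x$ (legitimate because $u=\gamma x^e$ recovers the initial form of $u$ from that of $x$), I would show that $x,v,P_2,\ldots$ is a generating sequence of $\nu^*$ in $S$. Since the initial forms of a generating sequence generate the associated graded ring over the residue field, this shows $\text{gr}_{\nu^*}(S)$ is generated as an $S/m_S$-algebra by $\text{in}_{\nu^*}(x)$ and the image of $\text{gr}_\nu(R)$, i.e. that $\bar\Phi$ is surjective.

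The main obstacle is injectivity. Because $1,Z,\ldots,Z^{e-1}$ is a free basis of the quotient ring over $\text{gr}_\nu(R)\tensor_{R/m_R}S/m_S$, and because each $\rho\in\Gamma_{\nu^*}$ has a unique expression $\rho=\eta+i_0\beta$ with $\eta\in\Gamma_\nu$ and $0\le i_0\le e-1$, the source has degree-$\rho$ component of $S/m_S$-dimension equal to $\dim_{R/m_R}[\text{gr}_\nu(R)]_\eta$. Thus $\bar\Phi$ is an isomorphism if and only if
\[
\dim_{S/m_S}\mathcal P_\rho(S)/\mathcal P_\rho^+(S)=\dim_{R/m_R}\mathcal P_\eta(R)/\mathcal P_\eta^+(R)
\quad\text{for every }\rho=\eta+i_0\beta .
\]
Establishing this equality is exactly where the hypotheses $n=2$, $\mbox{char }k=0$, and stability of $R\to S$ enter: I would prove that multiplication by $\text{in}_{\nu^*}(x)^{i_0}$ carries a residue basis of the degree-$\eta$ piece over $R$ to a residue basis of the degree-$\rho$ piece over $S$ with no collapsing modulo higher value, using the explicit generating-sequence and semigroup description of rank-one rational valuations of a two-dimensional regular local ring (in the spirit of Spivakovsky and of Ghezzi--H\`a--Kashcheyeva). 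This is the step that fails in positive characteristic, where wild ramification destroys the clean correspondence of graded pieces.

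Finally I would compute the degree of the extension of quotient fields. Set $T=\text{gr}_\nu(R)\tensor_{R/m_R}S/m_S$, so that the isomorphism presents $\text{gr}_{\nu^*}(S)$ as $T[Z]/(Z^e-c)$ with $c=[\gamma_0]^{-1}[u]$ homogeneous of degree $e\beta$. Linear disjointness of $V_\nu/m_\nu$ and $S/m_S$ over $R/m_R$ (condition 2c) and Lemma \ref{LemmaN1}), together with separability in characteristic zero and the fact that $R/m_R$ is algebraically closed in the quotient field of $\text{gr}_\nu(R)$ since $\Gamma_\nu$ is torsion free, give $[\,\mathrm{QF}(T):\mathrm{QF}(\text{gr}_\nu(R))\,]=f$. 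For the remaining factor $e$ I would show $Z^e-c$ is irreducible over $\mathrm{QF}(T)$: as $T$ is $\Gamma_\nu$-graded and $\beta$ has order exactly $e$ in $\Gamma_{\nu^*}/\Gamma_\nu$, no homogeneous element of $\mathrm{QF}(T)$ has degree $(e/p)\beta$ for a prime $p\mid e$, so $c$ is not a $p$-th power and (treating $4\mid e$ the same way) the classical irreducibility criterion for binomials $Z^e-c$ applies. Combining the two factors gives degree $ef$.
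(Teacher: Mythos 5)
Your outline follows the same strategy as the paper (define the map sending $Z\mapsto {\rm in}_{\nu^*}(x)$, get surjectivity from a generating sequence of $\nu^*$ in $S$ obtained from one of $\nu$ in $R$ by replacing $u$ with $x$, check injectivity graded piece by graded piece, then count degrees), but it has a genuine gap: the sentence ``I would show that $x,v,P_2,\ldots$ is a generating sequence of $\nu^*$ in $S$'' is precisely Proposition \ref{Prop30}, and that proposition is the technical heart of the whole paper --- Sections \ref{Genseq}, \ref{SecStable} and \ref{Sec4} exist solely to prove it. Your parenthetical justification (``legitimate because $u=\gamma x^e$ recovers the initial form of $u$ from that of $x$'') does not address the actual difficulty: one must show that each $P_{i+1}$, produced in $R$ by the algorithm of Theorem 4.2 of \cite{CV} relative to $(u,v)$, is also a valid output of that algorithm run in $S$ relative to $(x,y)$, i.e. that $\overline n_i(S)=\overline n_i(R)$, $\alpha_i(S)=\alpha_i(R)$, $d_i(S)=d_i(R)$ and $f_i^S=f_i^R$ for \emph{every} $i$. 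The paper proves these equalities by an induction that descends to the quadratic transforms $R_1\rightarrow S_1$ (Lemma \ref{Lemma2} together with Theorem 7.1 of \cite{CV}) and uses stability of every stage $R_i\rightarrow S_i$, in particular $t_i=e$ and $[S_i/m_{S_i}:R_i/m_{R_i}]=f$ for all $i$. Stability cannot enter only once, at the end, in a dimension count: without it the statement is false (Example 9.4 of \cite{CV} is a monomial, non-stable extension for which even $S^{S}(\nu^*)$ fails to be a finitely generated $S^{R}(\nu)$-module), so any proof must invoke it at every inductive step, and your outline gives no indication of how. Your injectivity step is deferred in exactly the same way (``I would prove that multiplication by ${\rm in}_{\nu^*}(x)^{i_0}$ carries a residue basis \ldots with no collapsing''); in the paper this requires no new argument once Proposition \ref{Prop30} is available: it follows from the monomial bases of the graded pieces given by Theorem 4.2(2) of \cite{CV}, the equalities $n_i(R)=n_i(S)$, and the observation that $i\nu^*(x)\notin\Gamma_\nu$ for $1\le i\le e-1$ puts the terms of any relation in distinct cosets of $\Gamma_\nu$.

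There is also an outright error in your degree computation: the claim that $R/m_R$ is algebraically closed in the quotient field of ${\rm gr}_\nu(R)$ ``since $\Gamma_\nu$ is torsion free'' is false. Take $R=\QQ[x,y]_{(x,y)}$ and $\nu$ the restriction to $\QQ(x,y)$ of the $t$-adic valuation on Puiseux series over $\QQ(\sqrt{2})$ under $x\mapsto t$, $y\mapsto \sqrt{2}\,t+t^{3/2}$. Then $\nu(y^2-2x^2)=5/2>2$, so $[y]^2=2[x]^2$ in ${\rm gr}_\nu(R)$, and $[y]/[x]$ is a square root of $2$ in the quotient field, which is algebraic over $\QQ=R/m_R$ but not in it. In general the degree-zero part of the homogeneous quotient field of ${\rm gr}_\nu(R)$ contains the residues $\alpha_i$ of the algorithm, which generate $V_\nu/m_\nu$ over $R/m_R$ --- typically a nontrivial algebraic extension. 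This is not a cosmetic slip: if one drops linear disjointness, the tensor product can fail to be a domain (in the example above, with $S/m_S=\QQ(\sqrt{2})$ one has $([y]\otimes 1-[x]\otimes\sqrt{2})([y]\otimes 1+[x]\otimes\sqrt{2})=0$), so the degree-$f$ statement must be derived from condition 2c) and Lemma \ref{LemmaN1} combined with the structural fact that the degree-zero homogeneous elements of the quotient field of ${\rm gr}_\nu(R)$ lie in (the image of) $V_\nu/m_\nu$ --- a fact which again rests on the generating-sequence machinery you treated as a black box. Finally, your binomial-irreducibility argument for $Z^e-c$, while correct, is redundant: once the isomorphism is established the quotient ring is a domain, so $Z^e-c$ is automatically irreducible over the quotient field of ${\rm gr}_\nu(R)\otimes_{R/m_R}S/m_S$.
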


The remaining sections of this paper are devoted to the proof of this theorem. Our proof requires the construction of generating sequences
for valuations in arbitrary regular local rings of dimension two in \cite{CV}. Theorem \ref{Theorem2} is proven in Section \ref{Sec4}, as a consequence of Proposition \ref{Prop30}, which shows that a generating sequence in $R$ is almost a generating sequence in $S$ if $R\rightarrow S$ is stable.

In contrast to the fact that finite generation may not hold even for a monomial mapping, when $\nu^*$ is a rational 0-dimensional valuation with $n=2$ (Example 9.4 \cite{CV}), we have finite generation if $R\rightarrow S$ is stable.

\begin{Corollary} Suppose that $k$ is a field of characteristic zero, $\nu^*$ is a rational 0-dimensional valuation, $n=2$  and $R\rightarrow S$ is stable.
Then the semigroup $S^{S}(\nu^*)$ is a finitely generated $S^{R}(\nu)$-module.
\end{Corollary}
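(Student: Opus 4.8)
The plan is to read off the Corollary directly from the graded-ring isomorphism of Theorem~\ref{Theorem2}, by translating ring-theoretic module-finiteness into semigroup-theoretic finiteness via the support of the grading. The first observation is that both semigroups are exactly the supports of the respective gradings: since $S\subset V_{\nu^*}$ is dominated, one has $\mathcal P_0(S)=S$ and $\mathcal P_0^+(S)=m_S$, so $\mathrm{gr}_{\nu^*}(S)_0=S/m_S$, and more generally a homogeneous piece $\mathrm{gr}_{\nu^*}(S)_\gamma$ is nonzero if and only if $\gamma=\nu^*(f)$ for some $f\in S\setminus\{0\}$, i.e. $\gamma\in S^S(\nu^*)$. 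The same holds for $R$ and $\nu$. Moreover the $S^R(\nu)$-module structure on $S^S(\nu^*)$ (given by $\nu(f)+\nu^*(g)=\nu^*(fg)$ for $f\in R$, $g\in S$) is precisely the shadow on supports of the graded $\mathrm{gr}_\nu(R)$-module structure on $\mathrm{gr}_{\nu^*}(S)$. Thus it suffices to prove that $\mathrm{gr}_{\nu^*}(S)$ is a finitely generated graded module over $\mathrm{gr}_\nu(R)$ and to track the degrees of a homogeneous generating set.

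Next I would extract this module structure from Theorem~\ref{Theorem2}. Writing $A=\mathrm{gr}_\nu(R)\otimes_{R/m_R}S/m_S$, the presentation $\mathrm{gr}_{\nu^*}(S)\cong A[Z]/(Z^e-[\gamma_0]^{-1}[u])$ exhibits $\mathrm{gr}_{\nu^*}(S)$, since $Z^e-[\gamma_0]^{-1}[u]$ is monic in $Z$, as a free $A$-module with basis $1,Z,\dots,Z^{e-1}$; here $[\gamma_0]^{-1}[u]$ is a homogeneous element of $A$ of degree $\nu(u)$, and $\deg Z=\nu^*(x)=\nu(u)/e$, so $\deg Z^j=j\nu^*(x)$. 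Stability gives $[S/m_S:R/m_R]=f<\infty$, so a vector-space basis $\beta_1,\dots,\beta_f$ of $S/m_S$ over $R/m_R$ is a $\mathrm{gr}_\nu(R)$-module basis of $A$; these $\beta_i$ lie in $\mathrm{gr}_{\nu^*}(S)_0$ and so have degree $0$. Combining, the $ef$ elements $\{\beta_i Z^j : 1\le i\le f,\ 0\le j\le e-1\}$ form a homogeneous $\mathrm{gr}_\nu(R)$-module basis of $\mathrm{gr}_{\nu^*}(S)$, with $\deg(\beta_i Z^j)=j\nu^*(x)$.

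Finally I would run the support computation. For a free graded module $\bigoplus_k G\,e_k$ over a graded ring $G$ with homogeneous basis elements $e_k$ of degree $d_k$, the degree-$\gamma$ piece is $\bigoplus_k G_{\gamma-d_k}e_k$, which is nonzero exactly when $\gamma-d_k$ lies in the support of $G$ for some $k$; hence the support of the module equals $\bigcup_k(d_k+\mathrm{supp}\,G)$. Applying this with $G=\mathrm{gr}_\nu(R)$ and the generators above, whose degrees take only the $e$ distinct values $0,\nu^*(x),\dots,(e-1)\nu^*(x)$, yields
\[
S^S(\nu^*)=\bigcup_{j=0}^{e-1}\bigl(j\nu^*(x)+S^R(\nu)\bigr).
\]
Each $j\nu^*(x)=\nu^*(x^j)$ lies in $S^S(\nu^*)$, so this exhibits $S^S(\nu^*)$ as a finitely generated $S^R(\nu)$-module, generated by $0,\nu^*(x),\dots,(e-1)\nu^*(x)$.

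As for the main obstacle: essentially all the real content is packaged into Theorem~\ref{Theorem2}, which we may assume, so the argument above is a formal consequence of it. The only points requiring genuine care are the identification of the degree-zero graded pieces with the residue fields (so that the tensor factor $S/m_S$ and the basis $\beta_i$ genuinely sit inside the graded rings in degree $0$), and the elementary but crucial passage from module-finiteness to a finite union of translates of $S^R(\nu)$ at the level of supports. It is worth emphasizing that the possible non-Noetherianity of $\mathrm{gr}_\nu(R)$ for a rational valuation causes no difficulty here, because Theorem~\ref{Theorem2} furnishes an explicit finite free module structure rather than an abstract finiteness statement.
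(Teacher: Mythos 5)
Your proposal is correct, and it takes the same route the paper intends: the paper states this Corollary as an immediate consequence of Theorem~\ref{Theorem2} (equivalently, of the monomial bases (\ref{eq34}), (\ref{eq35}) coming from Proposition~\ref{Prop30}), offering no separate argument, and your derivation simply makes that implication explicit. Your support computation, yielding $S^S(\nu^*)=\bigcup_{j=0}^{e-1}\bigl(j\nu^*(x)+S^R(\nu)\bigr)$ from the free basis $\{\beta_iZ^j\}$, is exactly the finiteness statement the authors have in mind.
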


An interesting question is if an analogue of the conclusions of Proposition \ref{Prop1} holds in general for any $n$ and arbitrary valuations for stable mappings over fields $k$ of characteristic zero. It would be remarkable if this were true.

With some small modification in the definition of strongly monomial (in (3)), strong monomialization holds for Ahhyankar valuations in positive characteristic, as follows from \cite{KK}, (a strong form of  local uniformization is proven for Abhyankar valuations by Knaf and Kuhlmann), and thus Proposition \ref{Prop1} holds in positive characteristic. A description of ${\rm gr}_{\nu}(R)$ for $\nu$ an Abhyankar valuation dominating  a (singular)  local ring $R$, over an algebraically closed field of arbitrary characteristic, and a proof of local uniformization for Abhyankar valuations derived from this construction, has been recently given by Teissier in \cite{T}.

Over fields of  positive characteristic, it is shown in Section 7.11 of \cite{CP} that the strong monomialization theorem is not  true, even when $n=2$, $k$ is algebraically closed and $\nu$ is rational and zero dimensional. It is not known if monomialization holds, although it seems unlikely.
 Stable forms are given in \cite{CP}  for mappings in dimension two over an algebraically closed field of positive characteristic
which are much more complicated than in the characteristic zero case. The fundamental obstruction to obtaining strong monomialization is the defect. It is shown in \cite{CP} that strong monomialization holds in dimension two over algebraically closed fields $k$ for extensions of valuations for which there is no defect. In \cite{GH}, Ghezzi and Kashcheyeva prove Theorem \ref{Theorem2} when $k$ is algebraically closed of positive characteristic, $\dim K=2$  and the
extension has no defect.

In the example of Section 7.11 of \cite{CP}, the stable forms $R_i\rightarrow S_i$ satisfy
\begin{equation}\label{eqN24}
{\rm gr}_{\nu}(R_i)\rightarrow {\rm gr}_{\nu^*}(S_i)
\end{equation}
is integral but not finite, in contrast to the case of Proposition \ref{Prop1} and Theorem \ref{Theorem2}. In fact,
${\rm gr}_{\nu}(R_i)={\rm gr}_{\nu^*}(S_i)^p$. Further, $S^{S_i}(\nu^*)$ is not a finitely generated $S^{R_i}(\nu)$-module for any $i$.
In this example, the degree of the extension of quotient fields of (\ref{eqN24}) is $efp^{\delta(\nu^*/\nu)}=p^2$, where $\delta(\nu^*/\nu)=2$ is the defect of $\nu^*$ over $\nu$. The defect is always zero in characteristic zero, and for Abhyankar valuations.

\section{A modification of the algorithm of \cite{CV} to construct a generating sequence}\label{Genseq}

Suppose that $k$ is a field of characteristic zero and $K$ is a two dimensional algebraic function field over $k$.
Suppose that $\nu$ is a rational 0-dimensional valuation of $K$ (the value group is isomorphic as an ordered group to a subgroup of $\QQ$ and $\mbox{trdeg}_kV_{\nu}/m_{\nu}=0$). 
Suppose that $R$ is a  regular algebraic local ring of $K$  such that $\nu$ dominates $R$.

Let
$$
R\rightarrow T_1\rightarrow T_2\rightarrow \cdots
$$
be the sequence of quadratic transforms of $R$ along $\nu$, so that $V_{\nu}=\cup T_i$. Suppose that $x,y$ are regular parameters in $R$. There exists a smallest value $i$ such that the divisor of $xy$ in $\mbox{spec}(T_i)$ has only one component.
\begin{equation}\label{eq1}
{\rm Define}\,\, R_1=T_i.
\end{equation}

We consider the algorithm of Theorem 4.2 \cite{CV} to construct a generating sequence in $R$ with Remark 4.3 \cite{CV} and the following observation: We can replace $U_i$ with a unit $\tau_i\in R$ times $U_i$ in the algorithm. The algorithm (which we will call the modified algorithm to construct a generating sequence) iterates in the following way. Suppose that for $i\ge 0$ we have constructed the first $i+1$ terms
$$
P_0=x, P_1=y, P_2,\ldots, P_i
$$
of a generating sequence by the (modified) algorithm. To produce the next term $P_{i+1}$, the algorithm proceeds as follows.
First we compute
$$
\overline n_i=[G(\nu(P_0),\ldots,\nu(P_i)):G(\nu(P_0),\ldots, \nu(P_{i-1}))].
$$
This allows us to find a suitable element
\begin{equation}\label{eqN8}
U_i=P_0^{\omega_0(i)}P_1^{\omega_1(i)}\cdots P_{i-1}^{\omega_{i-1}(i)}\tau_i
\end{equation}
with $\tau_i\in R$ an arbitary  unit, such that $\nu(P_i^{\overline n_i})=\nu(U_i)$. Let
\begin{equation}\label{E2}
\alpha_i=\left[\frac{P_i^{\overline n_i}}{U_i}\right]\in V_{\nu}/m_{\nu},
\end{equation}
and
$$
f_i(z)=z^{d_i}+b_{i,d_i-1}z^{d_i-1}+\cdots +b_{i,0}
$$
 be the minimal polynomial of $\alpha_i$ over $R/m_R(\alpha_1,\ldots,\alpha_{i-1})$. Then the algorithm produces an element
 $P_{i+1}\in R$ of the form
 \begin{equation}\label{eq6}
 P_{i+1}=P_i^{\overline n_id_i}+\sum_{t=0}^{d_i-1}\left(\sum_{s=1}^{\lambda_t}a_{s,t}P_0^{j_0(s,t)}\cdots P_{i-1}^{j_{i-1}(s,t)}\right)P_i^{t\overline n_i}
\end{equation}
where $a_{s,t}\in R$ are units, $j_0(s,t),\ldots, j_{i-1}(s,t)\in \NN$ with $0\le j_k(s,t)<n_k$ for $k\ge 1$ and $0\le t<d_i$
such that
$$
\nu(P_0^{j_0(s,t)}\cdots P_{i-1}^{j_{i-1}(s,t)}P_i^{t\overline n_i})=\overline n_id_i\nu(P_i)
$$
for all $s,t$, and
\begin{equation}\label{eq7}
b_{i,t}=\left[\sum_{s=1}^{\lambda_t}a_{s,t}   \frac{P_0^{j_0(s,t)}\cdots P_{i-1}^{j_{i-1}(s,t)}}{U_i^{d_i-t}}\right] \in V_{\nu}/m_{\nu}.
\end{equation}
Then $P_0,P_1,\ldots, P_i,P_{i+1}$ are the first $i+2$ terms of a generating sequence for $\nu$ in $R$.

The observation of Remark 4.3 \cite{CV} is that any  choice of (\ref{eq6}) such that (\ref{eq7}) holds gives an extension $P_{i+1}$
to the next term in a generating sequence, satisfying the conclusions of Theorem 4.2 \cite{CV}.

We will consider the (modified) algorithm of Theorem 4.2 \cite{CV} in various rings $R$ with given regular parameters $x,y$. We will denote
$$
P_i(R)=P_i \mbox{ so } P_0(R)=x, P_1(R)=y,
$$
$$
\overline n_i(R)=\overline n_i, U_i(R)=U_i, \alpha_i(R)=\alpha_i, f_i^R(z)=f_i(z), d_i(R)=d_i, n_i(R)=n_i=d_i\overline n_i.
$$
These calculations not only depend on $R$, but on the previous terms $P_0,P_1,\ldots, P_{i-1}$ constructed in the algorithm.
\vskip .2truein
We will also consider the algorithm of Theorem 7.1 \cite{CV} in different rings $R$, with given regular parameters $x,y$, and a generating
sequence
$$
x=P_0, y=P_1, P_2,\ldots, P_i,\ldots
$$
 constructed by the (modified) algorithm 4.2 of \cite{CV}. This algorithm considers the birational  extension ring $R_1$ of $R$
 defined by (\ref{eq1}).

 The positive integers $\overline n_1$ and $\omega_0(1)$ of Theorem 4.2 \cite{CV} are defined by the conditions that
$\overline n_1\nu(y)=\omega_0(1)\nu(x)$ and $\mbox{gcd}(\overline n_1,\omega_0(1))=1$.
Choose $a,b\in \NN$ so that $\overline n_1b-\omega_0(1)a=1$. Define
\begin{equation}\label{E4}
x_1=\frac{x^b}{y^a},\,\,\, y_1=\frac{y^{\overline n_1}}{x^{\omega_0(1)}}.
\end{equation}

Let
\begin{equation}\label{eqN9}
\sigma=[y_1]\in V_{\nu}/m_{\nu},
\end{equation}
 which is nonzero. Then (as is shown in Theorem 7.1 \cite{CV}) $R_1/m_{R_1}=R/m_R[\sigma]$.
 Theorem 7.1 \cite{CV} shows that
 \begin{equation}\label{eqN10}
 Q_0=x_1, Q_1=\frac{P_2}{x_1^{\omega_0(1)n_1}}
 \end{equation}
 are regular parameters in $R_1$, and taking
 \begin{equation}\label{eqN11}
 Q_i=\frac{P_{i+1}}{Q_0^{\omega_0(1)n_1\cdots n_i}}
 \end{equation}
 for $1\le i$, the $Q_i$ are a generating sequence for $\nu$ in $R_1$ produced by the algorithm of Theorem 4.2 \cite{CV}
 (as interpreted by Remark 4.3 \cite{CV}).

 We will consider the algorithm of Theorem 7.1 \cite{CV} in different rings $R$, and will denote
 $$
 Q_i(R_1)=Q_i, \hat{\beta_i}(R_1)=\hat{\beta_i}, V_i(R_1)=V_i, \hat{\alpha_i}(R_1)=\hat{\alpha}_i
 $$
 in the notation of the proof of Theorem 7.1 \cite{CV}.

 We have that the $V_i(R_1)$ constructed in the the proof of Theorem 7.1 \cite{CV} for $R\rightarrow R_1$ are actually the $U_i(R_1)$
 as constructed by the algorithm of Theorem 4.2 \cite{CV}.

 Let $L_0\cong R/m_R$ be a coefficient field of $\hat R$, so that $\hat R= L_0[[x,y]]$.
 $$
 R_1 = R[x_1,y_1]_{m_{\nu}\cap R[x_1,y_1]}.
 $$
  $\nu(x_1)>0$ and $\nu(y_1)=0$. We have that
 $$
 R_1/m_{R_1}\cong L_0[\sigma],
 $$
 where $\sigma$ is the class of $y_1$ in $R_1/m_{R_1}$.
 Let $L_1\cong L_0(\sigma)$ be a coefficient field of $\hat R_1$ containing $L_0$ (this is possible since $k$ has characteristic zero, by Hensel's Lemma).
 Let
 \begin{equation}\label{eq8}
 y_1^*=y_1-\sigma\in m_{\hat{R_1}}.
 \end{equation}
 Thus $x_1, y_1^*$ are regular parameters in $\hat{R_1}$ and hence
 $$
 \hat{R_1}=L_1[[x_1,y_1^*]].
 $$
We have an expression
$$
 x=x_1^{\overline n_1}(y_1^*+\sigma)^a, y=x_1^{\omega_0(1)}(y_1^*+\sigma)^b
 $$
 in $\hat R_1$.

\section{Monomial forms under sequences of  quadratic transforms}\label{SecStable}

Suppose that $k$ is a field of characteristic zero, and $K\rightarrow K^*$ is an extension of two dimensional algebraic function fields over $k$.
Suppose that $\nu^*$ is a rational 0-dimensional valuation of $K^*$ which restricts to $\nu$.
Suppose that $R$ and $S$ are regular algebraic local rings of $K$ and $K^*$ respectively such that $\nu^*$ dominates $S$ and $S$ dominates $R$.

By Theorem 5.1 \cite{C} and Theorem 4.8 \cite{CP} (summarized  after (\ref{eqN1})), there exists a  sequence of quadratic transforms along $\nu^*$
$$
\begin{array}{ccc}
R'&\rightarrow &S'\\
\uparrow&&\uparrow\\
R&\rightarrow &S
\end{array}
$$
such that $R'\rightarrow S'$ is strongly monomial. For this type of valuation, this means that $R'$ has a regular system of parameters $u,v$ and $S'$ has a regular system of parameters $x,y$ giving an expression
\begin{equation}\label{E1}
u=\gamma_0 x^t, v=y
\end{equation}
where $\gamma_0$ is a unit in $S'$.  For the rest of this section, we will  assume that $R\rightarrow S$ is strongly monomial (so $R$, $S$ have regular parameters satisfying (\ref{E1})), but we do not assume that $R\rightarrow S$ is stable.

\begin{Lemma}\label{Lemma2} Suppose that $R$ has regular parameters $u,v$ and $S$ has regular parameters $x,y$ giving an expression
$$
u=\gamma_0 x^t , v=y
$$
where $\gamma_0$ is a unit in $S$. Let $R\rightarrow R_1$ be the sequence of quadratic transforms along $\nu$ defined by
(\ref{eq1}) and  Let $S\rightarrow S_1$ be the sequence of quadratic transforms along $\nu^*$ defined by
(\ref{eq1}). Then $R_1$ has regular parameters $u_1,\tilde v_1$ and $S_1$ has regular parameters $x_1,\tilde y_1$ such that
$$
u_1=\gamma_1x_1^{t_1}, \tilde v_1=\tilde y_1
$$
where $\gamma_1$ is a unit in $S_1$.
\end{Lemma}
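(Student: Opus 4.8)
The plan is to make the birational construction of Section~\ref{Genseq} explicit for both rows of the diagram and then compare the two. First I would apply the recipe (\ref{E4}) to $R$ with the regular parameters $u,v$: letting $\overline m,\ell$ be the coprime positive integers with $\overline m\,\nu(v)=\ell\,\nu(u)$ and choosing $a,b$ with $\overline m b-\ell a=1$, this gives $R_1=R[u_1,w_1]_{m_\nu\cap R[u_1,w_1]}$ with
$$
u_1=\frac{u^b}{v^a},\qquad w_1=\frac{v^{\overline m}}{u^{\ell}},
$$
where $\nu(u_1)>0$ and $\nu(w_1)=0$. I would apply the same recipe to $S$ with $x,y$, obtaining the coprime pair $\overline n,\omega$ (with $\overline n\,\nu^*(y)=\omega\,\nu^*(x)$), a choice $c,d$ with $\overline n d-\omega c=1$, and $x_1=x^d/y^c$, $y_1=y^{\overline n}/x^{\omega}$, so that $x=x_1^{\overline n}y_1^{c}$ and $y=x_1^{\omega}y_1^{d}$ in $S_1$, with $\nu^*(y_1)=0$. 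The relations $v=y$ and $u=\gamma_0 x^t$ give $\nu(v)=\nu^*(y)$ and $\nu(u)=t\,\nu^*(x)$, hence $\overline m/\ell=\nu(u)/\nu(v)=t\overline n/\omega$; writing $g=\gcd(\omega,t\overline n)=\gcd(\omega,t)$ (the last equality because $\gcd(\omega,\overline n)=1$) and using $\gcd(\overline m,\ell)=1$ forces $\overline m=t\overline n/g$, $\ell=\omega/g$.

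Next I would substitute $u=\gamma_0 x^t$, $v=y$ into $u_1,w_1$ and rewrite in terms of $x_1,y_1$. A direct computation gives
$$
u_1=\gamma_0^{\,b}\,x_1^{\,t\overline n b-\omega a}\,y_1^{\,tcb-da}\qquad\text{and}\qquad w_1=\gamma_0^{-\ell}\,y_1^{\,t/g},
$$
where the $x_1$-exponent of $u_1$ equals $t\overline n b-\omega a=g$ (multiply $\overline m b-\ell a=1$ by $g$), and the $x_1$-exponent of $w_1$ is $0$ (consistent with $\nu^*(w_1)=0$). Thus $u_1=\gamma_1 x_1^{\,t_1}$ with $t_1=g=\gcd(\omega,t)$ and $\gamma_1=\gamma_0^{\,b}y_1^{\,tcb-da}$ a unit in $S_1$, which already supplies the monomial relation between the value-positive parameters.

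It remains to produce the second parameters, and this is where the work really lies. Following (\ref{eq8}) I would pass to completions, writing $\widehat{S_1}=M_1[[x_1,y_1^*]]$ with $y_1^*=y_1-\sigma$, $\sigma=[y_1]$, and $\widehat{R_1}=L_1[[u_1,w_1^*]]$ with $w_1^*=w_1-\sigma_R$, $\sigma_R=[w_1]=[\gamma_0]^{-\ell}\sigma^{t/g}$, the coefficient fields chosen compatibly so that $L_1\subseteq M_1$ (possible since $k$ has characteristic zero). I would then set $\tilde v_1=\tilde y_1:=w_1^*$; by construction this is already a regular parameter of $\widehat{R_1}$ together with $u_1$, so the only point to verify is that $x_1,w_1^*$ is a regular system of parameters of $\widehat{S_1}$, i.e.\ that $w_1^*\notin (x_1)+m_{\widehat{S_1}}^2$. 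I expect the linear-part computation here to be the crux. The observation that saves the argument is that $x$ and $y$ are both divisible by $x_1$ in $\widehat{S_1}$ (their $x_1$-exponents $\overline n,\omega$ are positive), so the unit $\gamma_0\in S$ satisfies $\gamma_0\equiv[\gamma_0]\pmod{x_1}$; hence $\gamma_0^{-\ell}$ contributes no $y_1^*$-term modulo $x_1$, and expanding $w_1=\gamma_0^{-\ell}(y_1^*+\sigma)^{t/g}$ yields
$$
w_1^*\equiv [\gamma_0]^{-\ell}\,\tfrac{t}{g}\,\sigma^{(t/g)-1}\,y_1^*\pmod{(x_1)+m_{\widehat{S_1}}^2}.
$$
Here the characteristic zero hypothesis enters decisively: $t/g$ is invertible in the residue field and $\sigma\neq0$, so the coefficient is nonzero and $x_1,w_1^*$ is indeed a regular system of parameters of $\widehat{S_1}$. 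Taking $\tilde v_1=\tilde y_1=w_1^*$ then gives the asserted parameters with $u_1=\gamma_1 x_1^{t_1}$ and $\tilde v_1=\tilde y_1$. I would expect this final step to be the main obstacle, and to break down precisely when $\mathrm{char}\,k=p$ divides $t/g$, in accordance with the failure of strong monomialization in positive characteristic noted in the introduction.
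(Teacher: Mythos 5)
Your construction follows the same route as the paper's own proof: apply the recipe of (\ref{E4}) to both $R$ and $S$, substitute $u=\gamma_0x^t$, $v=y$ to read off the monomial form of $u_1$, pass to completions with compatible coefficient fields (Hensel, characteristic zero), and verify regularity of the second parameter by a linear-part computation in $\hat{S_1}$. Your core computations are correct, and at two points you are actually sharper than the paper: from $\overline m/\ell=t\overline n/\omega$ you extract the explicit exponents $t_1=\gcd(\omega,t)$ and $m=t/g$, where the paper only argues $t_1>0$ from positivity of values and $m\ne 0$ by a $2\times 2$ determinant argument; and you justify the congruence $\gamma_0\equiv[\gamma_0]\pmod{x_1}$ (via $x,y\in(x_1)\hat S_1$), which the paper uses silently when it writes $\gamma_0^{-\omega_0(1)(R)}=\beta+x_1\Omega$. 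You also correctly identify that characteristic zero enters through the invertibility of $m=t/g$ in the residue field.

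There is, however, a gap at the very last step. You take $\tilde v_1=\tilde y_1:=w_1^*=w_1-\sigma_R$, where $\sigma_R$ lies in the coefficient field $L_1$ of $\hat{R_1}$. Coefficient-field elements are Hensel lifts constructed inside the completion and in general do \emph{not} lie in $R_1$ (the residue field $R/m_R[\sigma(R)]$ need not embed in $R_1$ itself), so what you have produced is a regular system of parameters of $\hat{R_1}$ and $\hat{S_1}$, not of $R_1$ and $S_1$ as the lemma asserts. This is not a cosmetic point: the lemma is iterated via (\ref{eq1}), and its output is fed into the generating-sequence algorithm in Section \ref{Sec4}, both of which require actual elements of the local rings. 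The missing step is exactly the paper's closing argument: take any regular parameters $u_1,v_1'$ of $R_1$. Since $u_1,w_1^*$ generate $m_{\hat{R_1}}$, one has $v_1'=\alpha u_1+\beta w_1^*$ with $\beta$ a unit in $\hat{R_1}$; as $u_1=\gamma_1x_1^{t_1}\in(x_1)\hat{S_1}$ and $w_1^*$ has nonzero $y_1^*$-linear part modulo $(x_1)+m_{\hat{S_1}}^2$, the pair $x_1,v_1'$ generates $m_{\hat{S_1}}$, hence generates $m_{S_1}$ by faithful flatness of completion. Setting $\tilde v_1=\tilde y_1=v_1'$ then gives the conclusion with parameters genuinely in $R_1$ and $S_1$.
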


\begin{proof} We use the notation of the previous section. We have that
$$
R_1=R[u_1,v_1]_{m_{\nu}\cap R[u_1,v_1]},
$$
$$
u=u_1^{\overline n_1(R)}v_1^{a(R)}, v=u_1^{\omega_0(1)(R)}v_1^{b(R)}
$$
with
$$
\overline n_1(R)b(R)-\omega_0(1)(R)a(R)=1.
$$
We have that
$$
v_1=\frac{v^{\overline n_1(R)}}{u^{\omega_0(1)(R)}}
$$
so that $\nu(v_1)=0$ and $[v_1]=\sigma(R)$ in $V_{\nu}/m_{\nu}$. We have
$$
u_1=\frac{u^{b(R)}}{v^{a(R)}}.
$$
Further,
$$
S_1=S[x_1,y_1]_{m_{\nu^*}\cap S[x_1,y_1]}
$$
where
$$
x=x_1^{\overline n_1(S)}y_1^{a(S)}, y=x_1^{\omega_0(1)(S)}y_1^{b(S)}
$$
with $\overline n_1(S)b(S)-\omega_0(1)(S)a(S)=1$. We have that
$$
y_1=\frac{y^{\overline n_1(S)}}{x^{\omega_0(1)(S)}}
$$
so that $\nu^*(y_1)=0$, and $[y_1]=\sigma(S)$ in $V_{\nu^*}/m_{\nu^*}$. We have
\begin{equation}\label{eq111}
x_1=\frac{x^{b(S)}}{y^{a(S)}}.
\end{equation}
Substitute
$$
\begin{array}{lll}
u_1&=& u^{b(R)}v^{-a(R)}=\gamma_0^{b(R)}x^{tb(R)}y^{-a(R)}\\
&=& \gamma_0^{b(R)}(x_1^{\overline n_1(S)}y_1^{a(S)})^{tb(R)}(x_1^{\omega_0(1)(S)}y_1^{b(S)})^{-a(R)}\\
&=& \gamma_0^{b(R)}x_1^{\overline n_1(S)tb(R)-\omega_0(1)(S)a(R)}y_1^{a(S)tb(R)-a(R)b(S)}.
\end{array}
$$
Set $t_1=\overline n_1(S)tb(R)-\omega_0(1)(S)a(R)$. Since $\nu^*(u_1)>0$, $\nu^*(x_1)>0$ and $\nu^*(\gamma_0)=\nu^*(y_1)=0$,
we have $t_1>0$.

$$
\begin{array}{lll}
v_1&=& u^{-\omega_0(1)(R)}v^{\overline n_1(R)}=(\gamma_0x^t)^{-\omega_0(1)(R)}y^{\overline n_1(R)}\\
&=& \gamma_0^{-\omega_0(1)(R)}(x_1^{\overline n_1(S)}y_1^{a(S)})^{-t\omega_0(1)(R)}(x_1^{\omega_0(1)(S)}y_1^{b(S)})^{\overline n_1(R)}\\
&=& \gamma_0^{-\omega_0(1)(R)}x_1^{\omega_0(1)(S)\overline n_1(R)-t\omega_0(1)(R)\overline n_1(S)}y_1^{b(S)\overline n_1(R)-a(S)t\omega_0(1)(R)}.
\end{array}
$$
$\nu^*(v_1)=\nu^*(y_1)=\nu^*(\gamma_0)=0$ and $\nu^*(x_1)>0$ implies
$$
\omega_0(1)(S)\overline n_1(R)-t\omega_0(1)(R)\overline n_1(S)=0.
$$
 Since $\overline n_1(S)b(S)-\omega_0(1)(S)a(S)\ne 0$, 
we  have that

$$
\left(\begin{array}{ll}
\overline n_1(S)& \omega_0(1)(S)\\
a(S)&b(S)\end{array}\right)
\left(\begin{array}{l} -t\omega_0(1)(R)\\ \overline n_1(R)\end{array}\right)\ne
\left(\begin{array}{l} 0\\ 0\end{array}\right).
$$
Thus
\begin{equation}\label{eqN7}
m:= b(S)\overline n_1(R)-a(S)t\omega_0(1)(R)\ne 0.
\end{equation}
We  have that $u_1,v_1\in S_1$, so that
$$
R_1=R[u_1,v_1]_{m_{\nu}\cap R[u_1,v_1]}\subset S_1.
$$
We have a commutative diagram
$$
\begin{array}{ccc}
\hat{R_1}=L_1[[u_1,v_1^*]]&\rightarrow &\hat{S_1}=M_1[[x_1,y_1^*]]\\
\uparrow&&\uparrow\\
\hat{R}=L[[u,v]]&\rightarrow & \hat{S}=M[[x,y]]
\end{array}
$$
where $L,M,L_1,M_1$ are coefficient fields of $\hat{R}, \hat{S}, \hat{R_1}, \hat{S_1}$ such that there are inclusions
$$
\begin{array}{ccc}
L_1&\rightarrow& M_1\\
\uparrow&&\uparrow\\
L&\rightarrow &M
\end{array}
$$
This is possible (by Hensel's Lemma) since $R,S, R_1, S_1$ have equicharacteristic zero.

$y_1^*=y_1-\sigma(S)$, $v_1^*=v_1-\sigma(R)$ are constructed as in (\ref{eq8}). We compute in $M_1[[x_1,y_1^*]]$,
$$
y_1^m=(y_1^*+\sigma(S))^m=\sigma(S)^m+m\sigma(S)^{m-1}y_1^*+\frac{m(m-1)}{2!}\sigma(S)^{m-2}(y_1^*)^2+\cdots
$$
$\gamma_0^{-\omega_0(1)(R)}=\beta+x_1\Omega$ with $0\ne \beta\in M_1$ and $\Omega\in \hat{S_1}$.

In $\hat{S_1}$ we have an expression
$$
\begin{array}{lll}
v_1&=& (\beta+x_1\Omega)(\sigma(S)^m+m\sigma(S)^{m-1}y_1^*+(y_1^*)^2\Lambda)\\
&=&  \beta\sigma(S)^m+\beta m\sigma(S)^{m-1}y_1^*+x_1\Omega'+(y_1^*)^2\Lambda'
\end{array}
$$
for some $\Lambda\in \hat{S_1}$, $\Omega', \Lambda'\in\hat{S_1}$. Thus $x_1, v_1-\beta\sigma(S)^m$ are regular parameters in $\hat{S_1}$, (and $\beta\sigma(S)^m=\sigma(R)$). Hence if $u_1,v_1'$ are regular parameters in $R_1$, then $x_1, y_1'=v_1'$ are regular parameters in $S_1$, and we have an expression:
$$
\begin{array}{lll}
u_1&=& \gamma_1x_1^{t_1}\\
v_1'&=&y_1'
\end{array}
$$
with $\gamma_1$ a unit in $S_1$.
\end{proof}

By iteration of Lemma \ref{Lemma2} and (\ref{eq1}), we obtain an infinite sequence

$$
\begin{array}{lll}
\vdots &&\vdots\\
\uparrow&&\uparrow\\
R_2&\rightarrow &S_2\\
\uparrow&&\uparrow\\
R_1&\rightarrow &S_1\\
\uparrow&&\uparrow\\
R&\rightarrow &S
\end{array}
$$
where each $R_i$ has regular parameters $u_i,\tilde v_i$ and each $S_i$ has regular parameters $x_i,\tilde y_i$
such that
$$
u_i=\gamma_ix_i^{t_i}, \tilde v_i=\tilde y_i
$$
where $\gamma_i$ is a unit in $S_i$.

Let $e=|\Gamma_{\nu^*}/\Gamma_{\nu}|$ and $f=[V_{\nu^*}/m_{\nu^*}:V_{\nu}/m_{\nu}]$.
If $R\rightarrow S$ is  stable, then
\begin{equation}\label{eq3}
t_i=e\mbox{ and }[S_i/m_{S_i}:R_i/m_{R_i}]=f
\end{equation}
for $i\ge 0$.

\section{Construction of a generating sequence in $S$ from that of $R$}\label{Sec4}

In this section, we continue to have the assumptions of Section \ref{SecStable}.
We further assume that $R\rightarrow S$ is stable.
Let
$$
P_0(R)=u, P_1(R)=v, P_2(R),\ldots
$$
be a generating sequence in $R$, constructed by the algorithm of Theorem 4.2 \cite{CV}.

Let $P_0(S)=x$, $P_1(S)=y$.

Then we have that the $t$ and $t_1$ in Lemma \ref{Lemma2} satisfy
\begin{equation}\label{eq11}
t=|\Gamma_{\nu^*}/\Gamma_{\nu}|=t_1,
\end{equation}
and
\begin{equation}\label{eq9}
[S/m_S:R/m_R]=[V_{\nu^*}/m_{\nu^*}:V_{\nu}/m_{\nu}]=[S_1/m_{S_1}:R_1/m_{R_1}].
\end{equation}
By the calculations in the previous section, we have that
\begin{equation}\label{eq10}
\left(\begin{array}{ll} b(R)& -a(R)\\ -\omega_0(1)(R)&\overline n_1(R)\end{array}\right)
\left(\begin{array}{ll}t&0\\0&1\end{array}\right)
\left(\begin{array}{ll} \overline n_1(S)& a(S)\\ \omega_0(1)(S)&b(S)\end{array}\right)
=\left(\begin{array}{ll} t_1 & *\\ 0&m\end{array}\right).
\end{equation}
Taking determinants and using the fact that $t_1=t$ gives $t=tm$ so that $m=1$.

Multiplying (\ref{eq10}) by
$$
\left(\begin{array}{ll} \overline n_1(R)& a(R)\\ \omega_0(1)(R) & b(R)\end{array}\right),
$$
we obtain
\begin{equation}\label{eq12}
\overline n_1(S)=\overline n_1(R), \omega_0(1)(S)=t\omega_0(1)(R).
\end{equation}
Since
$$
P_1(S)^{\overline n_1(S)}=P_1(R)^{\overline n_1(R)},
$$
 we can take $U_1(S)$ to be $U_1(R)=u^{\omega_0(1)(R)}$,
 so
 $$
 U_1(S)=u^{\omega_0(1)(R)}=\gamma_0^{\omega_0(1)(R)}x^{t\omega_0(1)(R)}=\gamma_0^{\omega_0(1)(R)}x^{\omega_0(1)(S)}.
 $$
 That is, we take $\tau_1=\gamma_0^{\omega_0(1)(R)}$ in (\ref{eqN8}).
  Thus
 $$
 \alpha_1(S)=\left[\frac{P_1(S)^{\overline n_1(S)}}{U_1(S)}\right]
 =\left[\frac{P_1(R)^{\overline n_1(R)}}{U_1(R)}\right] = \left[\frac{v^{\overline n_1(R)}}{u^{\omega_0(1)(R)}}\right]
 =\alpha_1(R),
 $$
 with the notation of (\ref{E2}).
 We have that $R_1/m_{R_1}=R/m_R[\sigma(R)]$ and $S_1/m_{S_1}=S/m_S[\sigma(S)]$ (with notation of (\ref{eqN9})).
 $$
 \alpha_1(R)=\left[\frac{v^{\overline n_1(R)}}{u^{\omega_0(1)(R)}}\right]=\sigma(R)
 $$
 and
 $$
 \begin{array}{lll}
 \alpha_1(S)&=&\left[\frac{y^{\overline n_1(R)}}{u^{\omega_0(1)(R)}}\right]
 =\left[\frac{y^{\overline n_1(R)}}{\gamma_0^{\omega_0(1)(R)}x^{t\omega_0(1)(R)}}\right]\\
 &=& [\gamma_0]^{-\omega_0(1)(R)}
 \left[\frac{y^{\overline n_1(S)}}{x^{\omega_0(1)(S)}}\right]
 =[\gamma_0]^{-\omega_0(1)(R)}\sigma(S).
 \end{array}
 $$
 Thus
$R_1/m_{R_1}=R/m_R(\alpha_1(R))$ and $S_1/m_{S_1}=S/m_S(\alpha_1(R))$.

By (\ref{eq9}), we have that
$$
[S_1/m_{S_1}:S/m_S]=[R_1/m_{R_1}:R/m_R]
$$
and thus
$$
d_1(S)=[S/m_S(\alpha_1(R)):S/m_S]=[R/m_R(\alpha_1(R)):R/m_R]=d_1(R),
$$
 and the minimal polynomial $f_1^S(z)$ of $\alpha_1(S)$ over $S/m_S$ is the minimal polynomial $f_1^R(z)$ of $\alpha_1(R)$ over $R/m_R$. Thus
 $$
 x,y=P_1(R), P_2(R)
 $$
 are the first terms of a generating sequence in $S$, obtained by the (modified) algorithm of Theorem 4.2 \cite{CV}.

\begin{Proposition}\label{Prop30} Suppose that  $i\ge 2$ and
$$
P_0(S)=x, P_1(S)=y, P_2(S)=P_2(R),\ldots, P_i(S)=P_i(R)
$$
are the first $i+1$ terms of a generating sequence in $S$ produced by the modified algorithm of Theorem 4.2 \cite{CV}.
Then
$$
P_0(S)=x, P_1(S)=y, P_2(S)=P_2(R),\ldots, P_i(S)=P_i(R), P_{i+1}(S)=P_{i+1}(R)
$$
are the first $i+2$ terms of a generating sequence in $S$ produced by the modified algorithm of Theorem 4.2 \cite{CV}.
\end{Proposition}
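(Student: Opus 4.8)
The plan is to prove the statement by induction on $i$, using the correspondence of Theorem 7.1 \cite{CV} to reduce the assertion for $(R,S)$ at level $i$ to the same assertion for the blown-up pair $(R_1,S_1)$ at level $i-1$. The case $i=1$ is precisely the computation carried out just before the Proposition, which produced $P_2(S)=P_2(R)$ and, along the way, the invariant equalities $\overline n_1(S)=\overline n_1(R)$ and $d_1(S)=d_1(R)$. By Lemma \ref{Lemma2} the pair $R_1\rightarrow S_1$ is again strongly monomial, say $u_1=\gamma_1x_1^{t_1}$, with $t_1=e$ by (\ref{eq3}); it inherits stability because any pair of quadratic transforms of $R_1,S_1$ along $\nu^*$ is one of $R,S$, so (\ref{eq3}) furnishes the numerics $t_1=e$, $[S_1/m_{S_1}:R_1/m_{R_1}]=f$ needed to apply the inductive hypothesis to it. The reduction is effective because passing to $(R_1,S_1)$ ``peels off'' the parameter $P_0=u$, the only term carrying the discrepancy $u=\gamma_0x^t$ between the $\nu$- and $\nu^*$-data, replacing it by the structurally identical but lower-level discrepancy $u_1=\gamma_1x_1^{t_1}$.

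First I would translate the data downward via (\ref{eqN10})--(\ref{eqN11}). The generating sequence $P_0(R),P_1(R),\dots$ in $R$ induces a generating sequence $Q_0(R_1),Q_1(R_1),\dots$ in $R_1$ with $Q_0(R_1)=u_1$ and $Q_j(R_1)=P_{j+1}(R)/Q_0(R_1)^{c_j}$ for $j\ge 1$, where $c_j=\omega_0(1)(R)\,n_1(R)\cdots n_j(R)$; similarly the hypothesized generating sequence $P_0(S),\dots,P_i(S)$ in $S$ induces $Q_0(S_1),\dots,Q_{i-1}(S_1)$ in $S_1$ with $Q_0(S_1)=x_1$ and $Q_j(S_1)=P_{j+1}(S)/x_1^{\,c_j'}$, where $c_j'=\omega_0(1)(S)\,n_1(S)\cdots n_j(S)$. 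Using $P_{j+1}(S)=P_{j+1}(R)$ for $j+1\le i$, the invariant equalities $\overline n_k(S)=\overline n_k(R)$ and $d_k(S)=d_k(R)$ for $k<i$ recorded at the earlier stages of the induction (so $n_k(S)=n_k(R)$), together with $\omega_0(1)(S)=t\,\omega_0(1)(R)$ from (\ref{eq12}) and $t_1=t$ from (\ref{eq11}), I would show $c_j'=t_1c_j$ and hence, from $u_1=\gamma_1x_1^{t_1}$, that $Q_j(S_1)=\gamma_1^{\,c_j}Q_j(R_1)$. Thus the two induced sequences differ only by the units $\gamma_1^{\,c_j}\in S_1$.

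Since the modified algorithm of \cite{CV} (Remark 4.3) permits multiplying each term by an arbitrary unit, $Q_0(R_1),\dots,Q_{i-1}(R_1)$ is then itself a generating sequence in $S_1$ produced by the modified algorithm, so the hypotheses of the Proposition at level $i-1$ hold for $(R_1,S_1)$. Applying the inductive hypothesis (the case $i=1$ when $i=2$) shows that $Q_i(R_1)$ extends it to a generating sequence in $S_1$, and the same step yields the invariant equalities at level $i-1$ for $(R_1,S_1)$, hence at level $i$ for $(R,S)$. Finally I would transport back through (\ref{eqN11}): rearranging gives $P_{i+1}(R)=Q_i(R_1)\,Q_0(R_1)^{c_i}$, and the correspondence of Theorem 7.1 \cite{CV} run for $S\rightarrow S_1$ then identifies $P_{i+1}(S)=P_{i+1}(R)$ as exactly the next term produced by the modified algorithm in $S$, which is the conclusion.

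The step I expect to be the main obstacle is the middle one: showing that the two induced sequences differ only by units. This requires deriving the exponent identity $c_j'=t_1c_j$ cleanly from the induction hypothesis, equivalently the invariant equalities $\overline n_k(S)=\overline n_k(R)$ and $d_k(S)=d_k(R)$ for $k<i$, rather than assuming them outright, and then confirming that the resulting discrepancies $\gamma_1^{\,c_j}$ are precisely of the form absorbed by the unit freedom of Remark 4.3 \cite{CV}. Once this matching of invariants is in place, both the downward translation and the transport back are formal consequences of the correspondence of Theorem 7.1 \cite{CV}.
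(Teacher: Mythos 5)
Your induction skeleton coincides with the paper's: both arguments reduce level $i$ for $(R,S)$ to level $i-1$ for the stable pair $(R_1,S_1)$ via the correspondence of Theorem 7.1 \cite{CV}, and both use stability to obtain $u_1=\gamma_1x_1^{t}$, $\omega_0(1)(S)=t\,\omega_0(1)(R)$ and the unit relation $Q_j(S_1)=\gamma_1^{c_j}Q_j(R_1)$. However, the two steps that carry your argument between the levels have genuine gaps. First, your conversion of the transferred sequence $x_1,Q_1(S_1),\ldots,Q_{i-1}(S_1)$ into $x_1,Q_1(R_1),\ldots,Q_{i-1}(R_1)$ rests on the claim that the modified algorithm ``permits multiplying each term by an arbitrary unit.'' It does not: the modification defined in Section \ref{Genseq} allows a unit factor $\tau_i$ only in the comparison monomial $U_i$ of (\ref{eqN8}); the terms of the generating sequence themselves must have the monic form (\ref{eq6}), and an arbitrary unit multiple of such a term is no longer of that form. (The particular units $\gamma_1^{c_j}$ arising here are in fact coherent --- in your notation $c_j=n_j(R)\,c_{j-1}$, matching the leading exponents --- so a conversion could be carried out, but verifying this is precisely the invariant-matching computation you defer.) The paper avoids this issue altogether: it applies the induction hypothesis to $(R_1,S_1)$ directly with the sequence $Q_j(R_1)$, which Theorem 7.1 \cite{CV} certifies as produced by the algorithm of Theorem 4.2 in $R_1$ from the parameters $u_1,Q_1(R_1)$; no conversion of the transferred $S$-sequence is needed.

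Second, and more seriously, your ``transport back'' step has no support. Theorem 7.1 \cite{CV} is a one-way (downward) correspondence, and even formally, lifting $Q_i(R_1)$ through (\ref{eqN11}) yields $Q_i(R_1)x_1^{tc_i}=\gamma_1^{-c_i}P_{i+1}(R)$, a non-monic unit multiple of $P_{i+1}(R)$ rather than $P_{i+1}(R)$ itself; moreover the exponent $c_i'=tc_i$ already presupposes $n_i(S)=n_i(R)$, which is part of what must be proved. What is actually required --- and what constitutes the core of the paper's proof --- is a direct verification that $P_{i+1}(R)$ meets the requirements of the modified algorithm in $S$ at level $i$: the equality $\overline n_i(S)=\overline n_i(R)$, obtained from the value-group computation using $G(\nu^*(x_1))=G(\nu^*(x),\nu^*(y))$ together with (\ref{eq22}) and $\overline n_{i-1}(R_1)=\overline n_i(R)$; the admissibility of the choice $U_i(S)=U_i(R)$, which gives $\alpha_i(S)=\alpha_i(R)$; and the equality $d_i(S)=d_i(R)$ (hence $f_i^S=f_i^R$), obtained by computing $d_{i-1}(S_1)=d_i(S)$ and $d_{i-1}(R_1)=d_i(R)$ via (60) of \cite{CV} and the residue field identities $S_1/m_{S_1}=S/m_S[\alpha_1(R)]$ and $R_1/m_{R_1}=R/m_R[\alpha_1(R)]$. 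Your proposal contains no mechanism for establishing these level-$i$ equalities, so the concluding identification of $P_{i+1}(R)$ as the next term of the modified algorithm in $S$ is never actually reached.
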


\begin{proof} With the assumption,  we have that for $j\le i-1$,
$$
\overline n_j(S)=\overline n_j(R), \alpha_j(S)=\alpha_j(R),
$$
$$
\begin{array}{lll}
d_j(S)&=&[S/m_S(\alpha_1(S),\ldots,\alpha_j(S)):S/m_S(\alpha_1(S),\ldots, \alpha_{j-1}(S))]\\
&=&[R/m_R(\alpha_1(R),\ldots,\alpha_j(R)):R/m_R(\alpha_1(R),\ldots, \alpha_{j-1}(R))]=d_j(R)
\end{array}
$$
and the minimal polynomial $f_j^S(z)$ of $\alpha_j(S)=\alpha_j(R)$
over $S/m_S(\alpha_1(S),\ldots, \alpha_{j-1}(S))$
is the minimal polynomial $f_j^R(z)$ of $\alpha_j(R)$ over $R/m_R(\alpha_1(R),\ldots, \alpha_{j-1}(R))$.

Theorem 7.1 \cite{CV} produces a  generating sequence $Q_0(R_1)=u_1, Q_1(R_1),Q_2(R_1),\ldots$ in $R_1$
from $P_0(R), P_1(R),P_2(R),\ldots$. The generating  sequence $Q_0(R_1)=u_1, Q_1(R_1),Q_2(R_1),\ldots$ in $R_1$
  can be produced by the algorithm of Theorem 4.2 from the regular system of parameters $u_1, Q_1(R_1)$ in $R_1$ (as shown in Theorem 7.1 \cite{CV} and recalled in (\ref{eq10}) and (\ref{eqN11})).

Since $R\rightarrow S$ is stable, we have that
\begin{equation}\label{eq21}
u_1=\gamma_1x_1^t
\end{equation}
for some unit $\gamma_1\in S_1$, and recalling (\ref{eq12}), we have that
\begin{equation}\label{eq20}
\omega_0(1)(S)=t\omega_0(1)(R).
\end{equation}

By the induction hypothesis applied to the stable map $R_1\rightarrow S_1$,  we have that
$$
x_1, Q_1(R_1),\ldots, Q_i(R_1)
$$
are the first $i+1$ terms of a generating sequence in $S_1$, produced by the modified algorithm of Theorem 4.2 \cite{CV} in $S_1$.

For $j\ge 1$, let
$$
\overline n_j(R_1), U_j(R_1), \alpha_j(R_1), d_j(R_1), f_j^{R_1}(z)
$$
be the calculations of the algorithm of Theorem 4.2 \cite{CV} in $R_1$, obtained in the construction of the generating sequence
$Q_0(R_1)=u_1, Q_1(R_1),Q_2(R_1),\ldots$.

For $j\le i$, let
$$
\overline n_j(S_1), U_j(S_1), \alpha_j(S_1), d_j(S_1), f_j^{S_1}(z)
$$
be the calculations of the modified algorithm of Theorem 4.2 \cite{CV} in $S_1$, obtained in the construction of the first $i+1$ terms of the generating sequence $x_1, Q_1(R_1),Q_2(R_1),\ldots, Q_i(R_1)$ in $S_1$. We have that for $j\le i-1$,
\begin{equation}\label{eq22}
\begin{array}{l}
\overline n_j(S_1)=\overline n_j(R_1), U_j(S_1)=U_j(R_1), \alpha_j(S_1)=\alpha_j(R_1),\\
d_j(S_1)=d_j(R_1), f_j^{S_1}(u)=f_j^{R_1}(u).
\end{array}
\end{equation}

Since
$$
Q_0(S_1)^{\omega_0(1)(S)}=Q_0(R_1)^{\omega_0(1)(R)}\gamma_1^{-\omega_0(1)(R)},
$$
we have from (\ref{eqN11}) that for $j\le i-1$,
$$
\begin{array}{lll}
Q_j(S_1)&=&\gamma_1^{\omega_0(1)(R)n_1(S)\cdots n_j(S)}\frac{P_{j+1}(S)}{Q_0(R_1)^{\omega_0(1)(R)n_1(S)\cdots n_j(S)}}\\
&=&\gamma_1^{\omega_0(1)(R)n_1(S)\cdots n_j(S)} Q_j(R_1).
\end{array}
$$

For $j\le i-1$, we have by (\ref{eqN11}) and (\ref{eq22}), and then by (\ref{eq21}) and (\ref{eq20}), that
$$
\begin{array}{lll}
\nu^*(Q_j(R_1))&=& \nu^*(P_{j+1}(R))-\omega_0(1)(R)n_1(R)\cdots n_j(R)\nu^*(u_1)\\
&=& \nu^*(P_{j+1}(R))-n_1(R)\cdots n_j(R)\omega_0(1)(S)\nu^*(x_1).
\end{array}
$$
Thus
$$
\begin{array}{lll}
G(\nu^*(x_1),\nu(Q_1(R)),\ldots, \nu(Q_j(R)))&=& G(\nu^*(x_1),\nu(P_2(R)),\ldots, \nu(P_{j+1}(R)))\\
&=& G(\nu^*(x), \nu^*(y),\nu(P_2(R)),\ldots, \nu(P_{j+1}(R)))\\
&=& G(\nu^*(x),\nu(P_1(R)),\ldots,\nu(P_{j+1}(R)))
\end{array}
$$
since $G(\nu^*(x_1))=G(\nu^*(x),\nu^*(y))$, as calculated before (55)  in the proof of Theorem 7.1 \cite{CV}.
Thus $\overline n_{i-1}(S_1)=\overline n_i(S)$. We have that $\overline n_{i-1}(S_1)=\overline n_{i-1}(R_1)$ by (\ref{eq22}),
and $\overline n_{i-1}(R_1)=\overline n_i(R)$ by (55) and (54)  in the proof of Theorem 7.1 \cite{CV}.
Thus
$$
\overline n_i(S)=\overline n_i(R).
$$
In applying the modified algorithm of Theorem 4.2 \cite{CV} to extend $x,P_1(R),\ldots, P_i(R)$ to a generating sequence in $S$,
we can thus take $U_i(S)=U_i(R)$, and then
$$
\alpha_i(S)=\left[\frac{P_i(S)^{\overline n_i(S)}}{U_i(S)}\right] = \left[\frac{P_i(R)^{\overline n_i(R)}}{U_i(R)}\right]
=\alpha_i(R).
$$
We have from (\ref{E4}) that
$$
y_1=\frac{y^{\overline n_1(S)}}{x^{\omega_0(1)(S)}}=\gamma_0^{\omega_0(1)(R)}\frac{v^{\overline n_1(R)}}{u^{\omega_0(1)(R)}}
=\gamma_0^{\omega_0(1)(R)}v_1.
$$
Thus
$$
\sigma(S_1)=[y_1]=[\gamma_0]^{\omega_0(1)(R)}[v_1]=[\gamma_0]^{\omega_0(1)(R)}\alpha_1(R)
$$
in $V_{\nu^*}/m_{\nu^*}$, and
$$
S_1/m_{S_1}=S/m_S[\alpha_1(R)]\mbox{ and } R_1/m_{R_1}=R/m_R[\alpha_1(R)].
$$
For $1\le j\le i-1$, by (60) of \cite{CV}, we have that
$$
\alpha_j(S_1)=\alpha_j(R_1)=\hat{\alpha}_j(R_1)=\alpha_{j+1}(R)\alpha_1(R)^{a(R)\omega_0(i+1)(R)+b(R)\omega_1(i+1)(R)}.
$$
Thus
$$
\begin{array}{lll}
d_{i-1}(S_1)&=&[S_1/m_{S_1}(\alpha_1(S_1),\ldots, \alpha_{i-1}(S_1)):S_1/m_{S_1}(\alpha_1(S_1),\ldots, \alpha_{i-2}(S_1))]\\
&=& [S/m_S(\alpha_1(R),\alpha_2(R),\ldots, \alpha_{i}(R)):S/m_{S}(\alpha_1(R),\ldots, \alpha_{i-1}(R))]=d_i(S)
\end{array}
$$
and
$$
\begin{array}{lll}
d_{i-1}(R_1)&=&[R_1/m_{R_1}(\alpha_1(R_1),\ldots, \alpha_{i-1}(R_1)):R_1/m_{R_1}(\alpha_1(R_1),\ldots, \alpha_{i-2}(R_1))]\\
&=& [R/m_R(\alpha_1(R),\alpha_2(R),\ldots, \alpha_{i}(R)):R/m_{R}(\alpha_1(R),\ldots, \alpha_{i-1}(R))]=d_i(R).
\end{array}
$$
We thus have that $d_i(S)=d_i(R)$ since $d_{i-1}(S_1)=d_{i-1}(R_1)$ by (\ref{eq22}). Thus the minimal polynomial $f_i^S(z)$ of $\alpha_i(S)=\alpha_i(R)$ over $S/m_S(\alpha_1(R),\ldots, \alpha_{i-1}(R))$ is the minimal polynomial $f_i^R(z)$ of $\alpha_i(R)$ over $R/m_R(\alpha_1(R),\ldots, \alpha_{i-1}(R))$. Thus we can take $P_{i+1}(S)=P_{i+1}(R)$ in the modified algorithm of Theorem 4.2
\cite{CV}.

\end{proof}

We obtain the following theorem (Theorem \ref{Theorem2} from the Introduction of this paper).  Theorem \ref{Theorem1} is proven by Ghezzi, H\`a and Kashcheyeva in \cite{GHK} when $k$ is algebraically closed of characteristic zero.

\begin{Theorem}\label{Theorem1} Suppose that $k$ is a field of characteristic zero, $\nu^*$ is a rational 0-dimensional valuation, $n=2$  and $R\rightarrow S$ is stable.
Then
$$
{\rm gr}_{\nu^*}(S)\cong \left({\rm gr}_{\nu}(R)\otimes_{R/m_R}S/m_S\right)[Z]/(Z^e-[\gamma_0]^{-1}[u]),
$$
and the degree of the extension  of quotient fields of  ${\mbox gr}_{\nu}(R)\rightarrow \mbox{gr}_{\nu^*}(S)$ is $ef$.
\end{Theorem}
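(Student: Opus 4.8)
The plan is to leverage Proposition~\ref{Prop30} to match the generating sequences in $R$ and $S$, build the obvious comparison map, and reduce both surjectivity and injectivity to the structural data recorded in the definition of stability. First I would combine Proposition~\ref{Prop30} with the base case verified just before it to conclude, by induction on $i$, that
\[
P_0(S)=x,\quad P_1(S)=y=v,\quad P_i(S)=P_i(R)\ (i\ge 2)
\]
is a generating sequence for $\nu^*$ in $S$; thus the two generating sequences agree in every term except the zeroth, where $x$ replaces $u$. Consequently ${\rm gr}_{\nu^*}(S)$ is generated as an $S/m_S$-algebra by ${\rm in}_{\nu^*}(x)$ together with the forms ${\rm in}_{\nu^*}(P_i(R))$ for $i\ge 1$, and each of the latter lies in the image of the natural map $\Psi\colon{\rm gr}_\nu(R)\to{\rm gr}_{\nu^*}(S)$ induced by $R\subset S$. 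This $\Psi$ is injective, since $\nu^*|_K=\nu$ forces ${\rm in}_\nu(g)\mapsto{\rm in}_{\nu^*}(g)\ne 0$ for $g\in R\setminus\{0\}$.

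Next I would define
\[
\Phi\colon\bigl({\rm gr}_\nu(R)\otimes_{R/m_R}S/m_S\bigr)[Z]/(Z^e-[\gamma_0]^{-1}[u])\longrightarrow{\rm gr}_{\nu^*}(S)
\]
by $\Psi$ on ${\rm gr}_\nu(R)$, the degree-zero inclusion on $S/m_S$, and $Z\mapsto{\rm in}_{\nu^*}(x)$. Because $R\to S$ is stable we have $t=e$ in (\ref{E1}), so $u=\gamma_0 x^e$ yields ${\rm in}_{\nu^*}(u)=[\gamma_0]\,{\rm in}_{\nu^*}(x)^e$; hence $\Phi$ kills $Z^e-[\gamma_0]^{-1}[u]$ and is well defined, and it is surjective by the previous paragraph. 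The substance of the theorem is the injectivity of $\Phi$. Writing $S/m_S=(R/m_R)(\theta)$ (possible in characteristic zero) with $f=[S/m_S:R/m_R]$, the source is a free ${\rm gr}_\nu(R)$-module with basis $\{Z^a\theta^b:0\le a<e,\ 0\le b<f\}$, so injectivity amounts to the ${\rm gr}_\nu(R)$-linear independence of the elements ${\rm in}_{\nu^*}(x)^a\theta^b$ inside ${\rm gr}_{\nu^*}(S)$.

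To prove that independence I would first reduce, by taking homogeneous components, to a relation $\sum_{a,b}c_{a,b}\,{\rm in}_{\nu^*}(x)^a\theta^b=0$ that is homogeneous of some total degree $\delta\in\Gamma_{\nu^*}$, with $c_{a,b}\in{\rm gr}_\nu(R)$ homogeneous of degree $\delta-a\,\nu^*(x)$ and $\theta^b$ of value $0$. Stability condition~(2a), applied to the matrix $\mathrm{diag}(e,1)$, identifies $\Gamma_{\nu^*}/\Gamma_\nu$ with $\ZZ/e$ via $a\mapsto a\,\nu^*(x)$, so there is exactly one $a$ with $\delta-a\,\nu^*(x)\in\Gamma_\nu$ and only those terms survive. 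Cancelling the nonzero factor ${\rm in}_{\nu^*}(x)^a$ (legitimate since ${\rm gr}_{\nu^*}(S)$ is a domain) leaves $\sum_b c_{a,b}\theta^b=0$ with all $c_{a,b}$ homogeneous of a common degree $\gamma\in\Gamma_\nu$. Writing a nonzero $c_{a,b}$ as ${\rm in}_\nu(h_{a,b})$ with $\nu(h_{a,b})=\gamma$ and dividing by a fixed ${\rm in}_\nu(g)$ ($g\in R$, $\nu(g)=\gamma$) moves the coefficients into $V_\nu/m_\nu$, so the relation becomes a $V_\nu/m_\nu$-linear dependence among $\theta^0,\ldots,\theta^{f-1}$. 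By stability condition~(2c), $V_\nu/m_\nu$ and $S/m_S$ are linearly disjoint over $R/m_R$, whence $\theta^0,\ldots,\theta^{f-1}$ remain independent over $V_\nu/m_\nu$ and every $c_{a,b}=0$. I expect this descent to $V_\nu/m_\nu$-coefficients, together with the use of linear disjointness, to be the main obstacle, since it is exactly the point at which stability (and not merely strong monomiality) is used.

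Finally, the degree statement follows formally from the isomorphism. The source of $\Phi$ is free of rank $ef$ over ${\rm gr}_\nu(R)$ (rank $f$ from $S/m_S$ over $R/m_R$, then rank $e$ from the monic relation in $Z$), and both ${\rm gr}_\nu(R)$ and ${\rm gr}_{\nu^*}(S)$ are domains because initial forms multiply. Inverting ${\rm gr}_\nu(R)\setminus\{0\}$ therefore turns the source into an $ef$-dimensional field extension of the fraction field of ${\rm gr}_\nu(R)$, which via $\Phi$ is identified with the fraction field of ${\rm gr}_{\nu^*}(S)$; hence the extension of quotient fields has degree $ef$.
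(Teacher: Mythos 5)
Your proposal is correct, and it shares the paper's skeleton: Proposition~\ref{Prop30} (together with the base case established just before it) transports the generating sequence of $R$ into $S$, which yields surjectivity of the comparison map, and a coset argument in $\Gamma_{\nu^*}/\Gamma_{\nu}$ --- the fact that $a\nu^*(x)\notin\Gamma_{\nu}$ for $1\le a\le e-1$, which is exactly how the paper disposes of the relation (\ref{eq36}) --- handles the powers of $Z$. Where you genuinely diverge is in proving injectivity of ${\rm gr}_{\nu}(R)\otimes_{R/m_R}S/m_S\rightarrow {\rm gr}_{\nu^*}(S)$. The paper deduces this from part 2) of Theorem 4.2 of \cite{CV}: each graded piece of either ring has an explicit basis of monomials in the initial forms of the generating sequence with exponents bounded by the $n_k$, and since $n_k(R)=n_k(S)$ and $[P_0(R)]=[\gamma_0][P_0(S)]^t$, distinct basis monomials of ${\rm gr}_{\nu}(R)_{\lambda}$ map to distinct basis monomials of ${\rm gr}_{\nu^*}(S)_{\lambda}$ times units of $S/m_S$; note that the basis statement in $S$ itself rests on Proposition~\ref{Prop30}. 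You instead cancel the isolated power of ${\rm in}_{\nu^*}(x)$ (legitimate, since the graded ring of a valuation is a domain), divide the homogeneous coefficients by a fixed initial form of the same value so that they land in $V_{\nu}/m_{\nu}$, and invoke the linear disjointness of $V_{\nu}/m_{\nu}$ and $S/m_S$ over $R/m_R$, i.e.\ stability condition 2c (equivalently, via Lemma~\ref{LemmaN1}, the equality $[S/m_S:R/m_R]=f$). This is a valid alternative with a real advantage: in your argument injectivity uses only the definition of stability (conditions 2a and 2c) and not Proposition~\ref{Prop30} or the basis theorem of \cite{CV}, so the hard inductive content of the paper is needed only for surjectivity; the cost is the bookkeeping of the residue-field descent (and your primitive element $\theta$ is inessential --- any $R/m_R$-basis of $S/m_S$ would serve). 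Your closing localization argument for the degree $ef$ (the source is ${\rm gr}_{\nu}(R)$-free of rank $ef$, and a finite-dimensional domain over a field is a field) is also correct, and fills in a step that the paper only asserts.
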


\begin{proof} We have an inclusion of graded algebras ${\rm gr}_{\nu}(R)\rightarrow {\rm gr}_{\nu^*}(S)$.
The classes $[P_i(R)]$ for $i\ge 0$ generate ${\rm gr}_{\nu}(R)$ as a ${\rm gr}_{\nu}(R)_0=R/m_R$-algebra and the classes
$[P_0(S)]$ and $[P_i(R)]$ for $i\ge 1$ generate ${\rm gr}_{\nu^*}(S)$ as a ${\rm gr}_{\nu}(S)_0=S/m_S$-algebra by Theorem 4.11 \cite{CV} and Proposition \ref{Prop30}. We have the relation
\begin{equation}\label{eq31}
[P_0(S)]^t[\gamma_0]=[P_0(R)]
\end{equation}
in ${\rm gr}_{\nu^*}(S)$. Further,
\begin{equation}\label{eq32}
n_i(R)=n_i(S)\mbox{ for }i\ge 1
\end{equation}
by Proposition \ref{Prop30}.

Since ${\rm gr}_{\nu}(R)\otimes_{R/m_R}S/m_S\rightarrow {\rm gr}_{\nu^*}(S)$
is homogeneous, to verify that it is 1-1, it suffices to show that the homomorphism of $S/m_S$-vector spaces
\begin{equation}\label{eq33}
{\rm gr}_{\nu}(R)_{\lambda}\otimes_{R/m_R}S/m_S\rightarrow {\rm gr}_{\nu^*}(S)_{\lambda}
\end{equation}
is 1-1 for all $\lambda\in S^{R}(\nu)$. By 2) of Theorem 4.2 \cite{CV}, the set of all monomials
\begin{equation}\label{eq34}
[P_0(R)]^{i_0}[P_1(R)]^{i_1}\cdots[P_r(R)]^{i_r}
\end{equation}
such that $r\in \NN$, $i_k\in \NN$, $0\le i_k<n_k(R)$ for $1\le k\le r$ and
$$
i_0\nu(P_0(R))+\cdots+i_r\nu(P_r(R))=\lambda
$$
is an $R/m_R$-basis of ${\rm gr}_{\nu}(R)_{\lambda}$, and the set of all
\begin{equation}\label{eq35}
[P_0(S)]^{j_0}[P_1(S)]^{j_1}\cdots[P_s(S)]^{j_s}
\end{equation}
such that $s\in \NN$, $j_k\in \NN$, $0\le j_k<n_k(S)$ for $1\le k\le s$ and
$$
j_0\nu^*(P_0(S))+\cdots+j_s\nu^*(P_s(S))=\lambda
$$
is an $S/m_S$-basis of ${\rm gr}_{\nu^*}(S)_{\lambda}$.

By (\ref{eq31}), (\ref{eq32}), (\ref{eq34}) and (\ref{eq35}), we have that (\ref{eq33}) is 1-1, so
$$
{\rm gr}_{\nu}(R)\otimes_{R/m_R}S/m_S\rightarrow {\rm gr}_{\nu^*}(S)
$$
is 1-1.

We have established that $[P_0(S)]$ generates ${\rm gr}_{\nu^*}(S)$ as a ${\rm gr}_{\nu}(R)\otimes_{R/m_R}S/m_S$-algebra and that the relation (\ref{eq31}) holds. To establish that the conclusions of the theorem hold, we must show that if there is a relation
\begin{equation}\label{eq36}
h_0+[P_0(S)]h_1+\cdots+[P_0(S)]^{t-1}h_{t-1}=0
\end{equation}
in ${\rm gr}_{\nu^*}(S)$, with $h_i\in {\rm gr}_{\nu}(R)\otimes_{R/m_R}S/m_S$, then $h_0=h_1=\cdots=h_{t-1}=0$. We may
assume that each $[P_0(S)]^jh_j$ is homogeneous of the same degree $\lambda$. Since $R\rightarrow S$ is stable, we have that $t=[\Gamma_{\nu^*}:\Gamma_{\nu}]$ and
$i\nu(P_0(S))\not\in \Gamma_{\nu}$ for $1\le i\le t-1$. Thus there can be at most one nonzero expression in (\ref{eq36}), so all terms are zero.

\end{proof}

\end{document}